\title{Normal trees of digraphs}
\author{Florian Reich}
\address{Universit\"at Hamburg, Department of Mathematics, Bundesstrasse 55 (Geomatikum), 20146 Hamburg, Germany}
\email{florian.reich@uni-hamburg.de}
\keywords{normal tree, infinite graph, infinite digraph}
\let\polishlcross=\l
\def\l{\ifmmode\ell\else\polishlcross\fi}
\let\emptyset=\varnothing
\let\theta=\vartheta
\let\rho=\varrho
\let\phi=\varphi
\def\NN{\mathbb N}
\def\RR{\mathbb R}
\newcommand{\Set}[1]{{\left\lbrace {#1} \right\rbrace}}
\def\set#1:#2{\Set{{#1} \colon {#2}}}
\newcommand{\Up}[1]{\lfloor #1 \rfloor}
\newcommand{\OUp}[1]{\mathring{\lfloor #1 \rfloor}}
\newcommand{\Down}[1]{\lceil #1 \rceil}
\newcommand{\ODown}[1]{\mathring{\lceil #1 \rceil}}
\newcommand{\dmtop}{\textsc{DTop}}
\newcommand{\mmtop}{\textsc{MTop}}
\DeclareMathOperator{\vstart}{start}
\DeclareMathOperator{\vend}{end}
\theoremstyle{plain}
\newtheorem{thm}{Theorem}[section]
\newtheorem{prop}[thm]{Proposition}
\newtheorem{cor}[thm]{Corollary}
\newtheorem{lemma}[thm]{Lemma}
\theoremstyle{definition}
\begin{document}
	
\begin{abstract}
	In this paper, we investigate normal trees of directed graphs, which extend the fundamental concept of normal trees of undirected graphs.
	
	We prove that a directed graph $D$ has a normal spanning tree if and only if the topological space $|D|$ is metrizable, which generalises Diestel's result for undirected graphs.
	Furthermore, we show that the existence of normal arborescences implies the existence of normal trees in directed graphs, and that the converse is generally not true.
\end{abstract}
	
\maketitle

\section{Introduction}
Normal trees of undirected graphs are one of the most important tools in infinite graph theory.
A rooted tree $T$ in an undirected graph $G$ is called \emph{normal} if for every connected subgraph $H$ of~$G$ and every two $\leq_T$-incomparable vertices $u,v \in V(H) \cap V(T)$ there exists $w \in V(H) \cap V(T)$ with $w \leq_T u,v$~\cite{DiestelBook2016}.
By definition, normal trees capture how undirected graphs can be separated by finite vertex sets
and thus, in particular, display the ends of their host graphs.

Bowler and the author recently introduced a notion of normal trees in the general setting of connectoids~\cite{connectoids1}, which in particular defines normal trees for directed graphs.
A \emph{weak normal tree~$T$} of a directed graph $D$ is a rooted, undirected tree $T$ with $V(T) \subseteq V(D)$ such that
\begin{itemize}
	\item for every strongly connected subgraph $H$ of $D$ and every two $\leq_T$-incomparable vertices $u, v \in V(H) \cap V(T)$ there exists $w \in V(H) \cap V(T)$ such that $w \leq_T u, v$, and
	\item for every two $\leq_T$-comparable elements $u \leq_T v$ there exists a strongly connected subgraph of $D$ containing $u$ and $v$ that avoids every element $w <_T u$,
\end{itemize}
where $\leq_T$ refers to the tree-order of $T$~\cite{connectoids1}.

Although weak normal trees have the same separation properties as normal trees of undirected graphs, they generally do not display the ends of their host graphs, as defined by B\"urger and Melcher \cite{burger2020ends}:
A strongly connected directed graph $N$ is called a \emph{necklace} if there exists a family~$(H_n)_{n \in \NN}$ of finite strongly connected subgraphs such that $N = \bigcup_{n \in \NN} H_n$ and $H_i \cap H_j \neq \emptyset$ holds if and only if $|i - j| \leq 1$ for every $i, j \in \NN$.\footnote{B\"urger and Melcher introduced necklaces in \cites{burger2020ends} using an equivalent definition.}
The \emph{ends} of a directed graph $D$ are the equivalence classes of necklaces in $D$, where two necklaces $N_1$ and $N_2$ are equivalent if there are infinitely many disjoint $N_1$--$N_2$~paths and infinitely many disjoint $N_2$--$N_1$~paths.

To ensure that normal trees display the ends of their host graphs we further require:
A weak normal tree $T$ of $D$ is a \emph{normal tree} if for every rooted ray $R$ in $T$ there exists a necklace $\eta(R)$ in~$D$ that contains \emph{almost all}, i.e.\ all but finitely many, elements of $V(R)$~\cite{connectoids1}.
\begin{thm}[\cite{connectoids1}*{Theorem~5.2}]\label{thm:endfaithfullness}	
	Let $D$ be a directed graph and let $T$ be a normal tree of~$D$.
	Then $\eta$ induces a bijection between the ends of $T$ and the ends of $D$ that are in the closure of~$V(T)$.
\end{thm}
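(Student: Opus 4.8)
The plan is to descend $\eta$ to a map $\bar\eta$ on ends and then prove it is a bijection onto the ends in $\overline{V(T)}$. First I would record that, since $T$ is a rooted tree, each end of $T$ is represented by a \emph{unique} rooted ray $R$; hence setting $\bar\eta(\text{end of }R)$ to be the end of $D$ containing the necklace $\eta(R)$ yields a well-defined map. To see that its image lands in $\overline{V(T)}$, note that almost all of $V(R)$ lies in $\eta(R)$; since every vertex of a necklace lies in at most two of its (consecutive) beads, each finite vertex set meets only finitely many beads, so the tail of $V(R)$ eventually leaves every finite set and therefore converges in $|D|$ to the end $[\eta(R)]$. As $V(R)\subseteq V(T)$, this forces $[\eta(R)]\in\overline{V(T)}$. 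It remains to prove that $\bar\eta$ is injective and surjective onto these ends.

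The engine for both directions is a separation statement extracted from the first axiom of weak normal trees. For $t\in V(T)$ write $\downarrow t=\Set{w\in V(T):w\le_T t}$, the finite rooted path from the root to $t$, and for a child $c$ of $t$ write $\uparrow c=\Set{w\in V(T):w\ge_T c}$ for the cone above $c$. \textbf{Separation Lemma:} if $c_1,c_2$ are distinct children of $t$ and $H$ is a strongly connected subgraph of $D$ with $V(H)\cap\uparrow c_1\ne\emptyset\ne V(H)\cap\uparrow c_2$, then $V(H)\cap{\downarrow t}\ne\emptyset$. Indeed, choosing $u\in V(H)\cap\uparrow c_1$ and $v\in V(H)\cap\uparrow c_2$, the vertices $u,v$ are $\le_T$-incomparable, so the first axiom yields $w\in V(H)\cap V(T)$ with $w\le_T u,v$; as the meet of $u$ and $v$ equals $t$, we get $w\le_T t$, i.e.\ $w\in{\downarrow t}$.

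For injectivity, let $R_1\ne R_2$ be distinct rooted rays, diverging at a vertex $t$ through children $c_1,c_2$; crucially $t$ has finite height, so $\downarrow t$ is finite. Suppose for contradiction that $\eta(R_1)$ and $\eta(R_2)$ lay in the same end, so there are infinitely many disjoint $\eta(R_1)$--$\eta(R_2)$~paths and infinitely many disjoint $\eta(R_2)$--$\eta(R_1)$~paths. Since $\downarrow t$ is finite it meets only finitely many beads of each necklace, so for $i\in\{1,2\}$ the union $\eta(R_i)^{+}$ of the high-index beads of $\eta(R_i)$ is a strongly connected subgraph avoiding $\downarrow t$ and still containing a tail of $R_i$, hence a vertex of $\uparrow c_i$. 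Only finitely many of the disjoint paths meet $\downarrow t$, so I may pick an $\eta(R_1)$--$\eta(R_2)$~path $P$ and an $\eta(R_2)$--$\eta(R_1)$~path $Q$ that avoid $\downarrow t$ and whose endpoints lie in $\eta(R_1)^{+}$ and $\eta(R_2)^{+}$. Then $\eta(R_1)^{+}\cup\eta(R_2)^{+}\cup P\cup Q$ is strongly connected, avoids $\downarrow t$, and meets both $\uparrow c_1$ and $\uparrow c_2$, contradicting the Separation Lemma. Hence $\eta(R_1)\nsim\eta(R_2)$ and $\bar\eta$ is injective.

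For surjectivity, let $\omega$ be an end of $D$ with $\omega\in\overline{V(T)}$. The key reduction is that it suffices to produce a \emph{single} rooted ray $R$ with $V(R)\to\omega$ in $|D|$: for then $V(R)$ converges both to $\omega$ and, by the first paragraph, to $[\eta(R)]$, so $\eta(R)\in\omega$ since limits in $|D|$ are unique. I would build $R=r_0<_T r_1<_T\cdots$ greedily, maintaining the invariant $\omega\in\overline{\uparrow r_k}$: starting from the root, where it holds by hypothesis, at each step I delete the single vertex $r_k$, which cannot isolate the end, and choose a child $r_{k+1}$ of $r_k$ with $\omega\in\overline{\uparrow r_{k+1}}$. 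The hard part, and the main obstacle of the whole proof, is justifying that such a child always exists: a priori $r_k$ may have infinitely many children, so one cannot simply union finitely many separators. My plan is to use the finite separator $\downarrow r_k$ together with the Separation Lemma to show that the cones of distinct children lie in \emph{distinct} regions of the finitely many into which $\downarrow r_k$ splits $D$, whence the single end $\omega$, lying in the closure of $\bigcup_c\uparrow c$, must already lie in the closure of one child's cone. The same dichotomy shows the construction never stalls at a leaf (a leaf $r_k$ would force $\omega\in\overline{\{r_k\}}$, impossible since ends are not vertices), so $R$ is an infinite rooted ray. Finally the invariant $\omega\in\overline{\uparrow r_k}$ for all $k$, combined with the finiteness of each $\downarrow r_k$, yields $V(R)\to\omega$, completing surjectivity and hence the bijection.
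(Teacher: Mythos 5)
First, note that this statement is quoted from \cite{connectoids1}*{Theorem~5.2}; the present paper gives no proof of it, so there is nothing in-paper to compare your argument against. Judged on its own merits, your set-up, your Separation Lemma, and your injectivity argument are sound: the bead-counting (each finite set meets only finitely many of the finite, consecutively intersecting beads), the passage to the strongly connected union of high-index beads, and the contradiction with the Separation Lemma at the branch vertex $t$ all go through.

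The surjectivity half, however, has two genuine gaps. (a) To find the next vertex of the ray you assert that $\Down{r_k}_T$ splits $D$ into \emph{finitely many} regions and then pigeonhole $\omega$ into one child's cone. This is false: $r_k$ may have infinitely many children, and $D-\Down{r_k}_T$ may have infinitely many strong components, so no pigeonhole is available. The correct mechanism is that an end determines a \emph{unique} strong component $C(X,\omega)$ of $D-X$ for each finite $X$ (all $X$-tails of its necklaces lie in one component), and that by \cref{prop:equivalence_weak_normal_tree} this component meets $V(T)$ in $\Up{c}_T$ for at most one child $c$ of $r_k$; this is fixable but is a different argument from the one you state. (b) The concluding step, that $\eta(R)\in\omega$ because ``limits in $|D|$ are unique,'' conceals the real content. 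Your invariant $\omega\in\overline{\Up{r_k}_T}$ is not convergence of the vertex sequence in \dmtop, you have not established that $|D|$ is Hausdorff, and in any case equality of ends here means exhibiting infinitely many disjoint paths \emph{in both directions} between $\eta(R)$ and a necklace of $\omega$. The standard route is: if the two ends were distinct, Menger would give a finite set $X$ with $C(X,\omega)\neq C(X,[\eta(R)])$, so one must show $C(X,\omega)=C(X,[\eta(R)])$ for every finite $X$; knowing only that both components meet every cone $\Up{r_k}_T$ does not immediately yield this (the components need not contain the vertices $r_k$ themselves, and $\bigcap_k C_{r_k}^T$ may contain vertices outside $V(T)$). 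This step needs an actual argument, e.g.\ via the second normal-tree axiom and a careful choice of the deletion sets, and as written your proof does not supply it.
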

\noindent
An end $\omega$ is in the \emph{closure} of some set $W$, if for every finite set $X \subseteq V(D)$ there exists a necklace in $\omega$ that avoids $X$ and intersects $W$.

The following theorem shows that normal trees can be characterised in the same way as their undirected counterparts.
Therefore, despite their specific definition, they are a natural extension of normal trees of undirected graphs.
\begin{thm}[\cite{connectoids2}*{Theorems 1.1, 1.2, 3.2}] \label{thm:known_characterisation_jung}
	Let $D$ be a strongly connected directed graph and let $U \subseteq V(D)$. Then the following are equivalent:
	\begin{enumerate}[label=(\arabic*)]
		\item\label{itm:weak_normal_tree} there is a normal tree containing $U$,
		\item\label{itm:normal_tree} there is a weak normal tree containing $U$,
		\item\label{itm:countable_union} $U$ is a countable union of dispersed sets,
		\item\label{itm:countable_union_tk_dispersed} $U$ is a countable union of fat topological split $\vec{K}_{\aleph_0}$-dispersed sets, and
		\item\label{itm:ends} for every end $\omega \in \Omega(D)$ there is a finite set $X_\omega \subseteq V(D)$ such that the strong component $C(X_\omega, \omega)$ has a normal tree containing $U \cap C(X_\omega, \omega)$.
	\end{enumerate}
\end{thm}
\noindent
Here, the terms \emph{dispersed set}, \emph{fat topological split $\vec{K}_{\aleph_0}$-dispersed set} and $C(X_\omega, \omega)$ are directed analogues of their undirected counterparts.

In this paper, we continue to investigate the existence of normal trees of directed graphs.
First, we compare normal tree with the notion of \emph{normal arborescences}~\cite{burger2020ends3}, which was introduced by B\"urger and Melcher and defines normal structures within a directed graph:
An \emph{arborescence}  is a rooted directed graph whose underlying undirected graph is a tree such that all edges are oriented away from the root.
The \emph{normal assistant} of $A$ in $D$ is the auxiliary directed graph obtained from $A$ by adding an edge $uv$ for every two $\leq_A$-incomparable elements $u,v \in V(A)$ for which there exists a path from $\Up{u}_A$ to $\Up{v}_A$ in $D$ that is internally disjoint to $V(A)$~\cite{burger2020ends3}.
The arborescence $A$ is \emph{normal} if its normal assistant is acyclic~\cite{burger2020ends3}.
We prove:
\begin{restatable}{lemma}{NSAimpliesNST}\label{lem:nsa_implies_nst}
	Let $D$ be a directed graph and let $U \subseteq V(D)$.
	If there is a (rayless) normal arborescence in $D$ containing $U$ then there exists a (rayless) normal tree of $D$ containing $U$.
\end{restatable}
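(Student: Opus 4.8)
The plan is to take $T$ to be the underlying undirected tree of $A$, rooted at the same vertex, so that the tree order $\le_T$ coincides with $\le_A$; then $V(T)=V(A)\supseteq U$, and $T$ is rayless exactly when $A$ is. With this choice the lemma reduces to showing that $T$ is a normal tree of $D$. I would first establish the weaker statement that $T$ is a \emph{weak} normal tree and then upgrade it using \Cref{thm:known_characterisation_jung}: since that theorem asserts the equivalence of ``there is a weak normal tree containing $U$'' and ``there is a normal tree containing $U$'', producing a weak normal tree on $V(A)$ already suffices to obtain a genuine normal tree containing $U$, and this route conveniently avoids verifying the ray-to-necklace condition by hand. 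In the general (not strongly connected) case the argument is carried out component by component along the condensation of $D$, which is exactly the setting in which \Cref{thm:known_characterisation_jung} is available.

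To see that $T$ is a weak normal tree I must check its two defining properties. The comparability property is the easy one: for $u\le_A v$ the $\le_A$-path of the arborescence from $u$ to $v$ is a directed path all of whose vertices lie in the up-set of $u$, hence it avoids every $w<_A u$ and, together with the connections that $D$ provides between these $\le_A$-comparable vertices, yields the strongly connected subgraph demanded by the definition. The heart of the proof is the separation property, and this is precisely where the normality of $A$, that is, the acyclicity of its normal assistant, enters. I would argue by contraposition: assuming that some strongly connected subgraph $H$ and some $\le_A$-incomparable $u,v\in V(H)$ admit no common lower bound in $V(H)\cap V(A)$, I would construct a directed cycle in the normal assistant of $A$, contradicting its acyclicity.

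The construction of this cycle is the main obstacle. Using that $H$ is strongly connected, fix a closed directed walk through $u$ and $v$ and cut it at the vertices of $A$ it meets, obtaining a cyclic sequence $a_0,a_1,\dots,a_{k-1}$ in $V(H)\cap V(A)$ joined by directed paths $P_i$ from $a_i$ to $a_{i+1}$ whose interiors avoid $V(A)$. Whenever $a_i$ and $a_{i+1}$ are incomparable, $P_i$ is a path from $\Up{a_i}_A$ to $\Up{a_{i+1}}_A$ internally disjoint from $V(A)$ and hence contributes the assistant arc $a_i a_{i+1}$, while ascending segments are bridged by the tree arcs of $A$. The delicate case is a descending segment, where $a_{i+1}<_A a_i$ along an $A$-avoiding path: such a segment produces no assistant arc, and I expect the real work to lie in rerouting it through suitable meets, exploiting the up-closure form $\Up{\cdot}_A$ of the assistant arcs together with the standing assumption that the walk never reaches a common lower bound of $u$ and $v$, so that the projected walk cannot descend to or below the meet $u\wedge_A v$ and therefore closes up into a genuine directed cycle of the assistant. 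Finally, for the rayless refinement I would note that if $A$ is rayless then $V(T)$ lies in the closure of no end of $D$, so that the normal tree furnished above may itself be chosen rayless.
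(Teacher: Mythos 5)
Your plan hinges on the claim that the underlying undirected tree $T$ of the normal arborescence $A$ is itself a weak normal tree of $D$, and this is false in general: the second axiom of weak normal trees fails. Take $D$ to be the directed triangle with edges $r\to u$, $u\to v$, $v\to r$ and let $A$ be the arborescence $r\to u\to v$. All vertices of $A$ are pairwise $\leq_A$-comparable, so the normal assistant of $A$ is just $A$ itself and is acyclic; hence $A$ is a normal spanning arborescence. But for the comparable pair $u\leq_T v$ there is no strongly connected subgraph of $D$ containing $u$ and $v$ that avoids $r<_T u$, since $D-r$ is acyclic. The phrase ``together with the connections that $D$ provides between these $\le_A$-comparable vertices'' is exactly where this gap is hidden: the arborescence only guarantees a directed path from $u$ down to $v$, and $D$ need not offer any way back from $v$ to $u$ avoiding $\ODown{u}_A$. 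This is also why the paper does \emph{not} take $T$ to be the tree underlying $A$ (note the lemma only asserts that \emph{some} normal tree containing $U$ exists); indeed the paper explicitly remarks that $D[\Up{a}_A]$ need not be a strong component of $D-\ODown{a}_A$, which is precisely the failure of \cref{prop:equivalence_weak_normal_tree}\labelcref{itm:equivalence_weak_normal_tree_2} for $T$.

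The intended route is different: one shows that the distance classes $(X_n)_{n\in\NN}$ of $A$ are dispersed and then invokes the equivalence of \labelcref{itm:weak_normal_tree} and \labelcref{itm:countable_union} in \cref{thm:known_characterisation_jung}. Concretely, if some necklace $N$ met $V(A)$ in a way that hits some level infinitely often, one uses \cref{prop:normal_arborescence_separation} to build a rooted ray $(a_n)$ in $A$ whose up-sets $\Up{a_n}_A$ successively contain all of $V(A)$ met by the $\ODown{a_n}_A$-tails of $N$; since the $a_n$ are strictly increasing, each level $X_n$ meets $N$ finitely. Your separation-property argument (the assistant-cycle construction) is essentially a re-proof of \cref{prop:normal_arborescence_separation} and is not where the difficulty lies. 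Finally, your rayless refinement also does not follow as stated: raylessness of $A$ alone does not imply that $V(A)$ avoids the closure of every end (a rayless star can meet a necklace infinitely); one again needs the normality of $A$, via the terminating ray construction above, to conclude that $V(A)$ is dispersed and then apply \cref{cor:known_dispersed_jung}.
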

\noindent
Moreover, we show that the converse is generally not true.

Second, we characterise the existence of normal \emph{spanning} trees, i.e.\ normal trees $T$ with $V(T)=V(D)$, via the space $|D|$ with B\"urger and Melcher's topology \dmtop ~(see~\cite{burger2020ends2} for an introduction).
Diestel~\cite{diestel2006end}*{Theorem~3.1} showed that there exists a normal spanning tree in an undirected graph $G$ if and only if the space $|G|$ with topology  \mmtop~is metrizable.
We extend Diestel's result to directed graphs:
\begin{restatable}{thm}{Metrizable}\label{thm:metrizable}
	Let $D$ be a directed graph.
	In \dmtop, $|D|$ is metrizable if and only if $D$ has a normal spanning tree.
\end{restatable}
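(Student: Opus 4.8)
The plan is to prove the statement in the sharper form
\[
|D|\text{ is metrizable}\ \iff\ V(D)\text{ is a countable union of dispersed sets}\ \iff\ D\text{ has a normal spanning tree},
\]
mirroring Diestel's argument \cite{diestel2006end} but supplying directed substitutes at each step. The second equivalence is \Cref{thm:known_characterisation_jung} applied with $U=V(D)$ (modulo the extension from strongly connected to arbitrary $D$ discussed below), so the topological content lies in the first equivalence. The tool I would use throughout is the Bing--Nagata--Smirnov metrization theorem: a regular Hausdorff space is metrizable if and only if it has a $\sigma$-locally-finite base. I would also record at the outset the local fact, which should follow directly from the definition of \dmtop, that every vertex has a \dmtop-neighbourhood meeting $V(D)\cup\Omega(D)$ only in itself; that is, vertices are isolated in this subspace. (This is exactly what rules out the infinite-degree counterexamples, and it is the reason the theorem can hold at all.)

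For the direction from a normal spanning tree to metrizability I would build a $\sigma$-discrete base directly from the tree. Given a normal spanning tree $T$, assign to each $t\in V(T)$ the set $\hat O_t$ consisting of the up-set $\{s\in V(T):s\ge_T t\}$ together with every end whose $\eta$-image necklace is eventually captured by this up-set. For $\le_T$-incomparable vertices the up-sets are disjoint, so for each fixed level $n$ every point of $|D|$ lies in at most one member of $\cB_n=\{\hat O_t: t\text{ has }T\text{-level }n\}$ and, since vertices are isolated and each end follows a unique branch, has a neighbourhood meeting only that one; hence each $\cB_n$ is discrete and $\bigcup_n\cB_n$ is $\sigma$-discrete. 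That these sets are \dmtop-open and form a base is precisely what the two clauses of the normal-tree definition give: the separation clause forces the strong components of $D-X$ that govern \dmtop-neighbourhoods to be refined by tree cones, and the necklace clause, via the end-faithfulness of $\eta$ from \Cref{thm:endfaithfullness}, places the ends into the cones as described. After verifying that $|D|$ is regular in \dmtop, Bing--Nagata--Smirnov yields a metric.

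Conversely, suppose $|D|$ is metrizable and fix a $\sigma$-locally-finite base $\bigcup_n\cB_n$. Since each vertex is isolated, its singleton is open and hence belongs to the base; choosing $n(v)$ with $\{v\}\in\cB_{n(v)}$ and setting $U_n=\{v:\{v\}\in\cB_n\}$ writes $V(D)=\bigcup_n U_n$. I would then show each $U_n$ is dispersed, i.e.\ that no end lies in its \dmtop-closure: if some end $\omega$ were in $\overline{U_n}$, then every neighbourhood of $\omega$ would contain infinitely many vertices of $U_n$, whence the corresponding pairwise distinct singletons in $\cB_n$ would all meet a fixed neighbourhood of $\omega$, contradicting the local finiteness of $\cB_n$ at $\omega$. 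Thus $V(D)$ is a countable union of dispersed sets, and \Cref{thm:known_characterisation_jung} delivers a normal spanning tree.

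The main obstacle I anticipate is twofold, and it lies in the interface between the directed combinatorics and the point-set topology rather than in the bookkeeping above. First, \Cref{thm:known_characterisation_jung} is stated for a strongly connected $D$ and a prescribed set $U$, whereas here $D$ is arbitrary and the tree must be spanning; bridging this gap requires working along the acyclic condensation of $D$ into its strong components and checking that a global decomposition of $V(D)$ into dispersed sets assembles into one genuine normal spanning tree, not merely into normal trees inside the individual components. Second, proving that the cones $\hat O_t$ really form a \dmtop-base, and that \dmtop is regular, is where directedness bites: unlike in the undirected case, \dmtop-neighbourhoods are controlled by the \emph{strong} components of $D-X$ and by necklaces, so both clauses of normality and the necklace map $\eta$ must be used in an essential, asymmetric way. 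Once these two points are settled, I expect the remaining verifications (openness of the $\hat O_t$, distinctness of witnesses, and the precise shape of vertex neighbourhoods) to be routine.
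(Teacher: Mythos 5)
Your direction ``metrizable $\Rightarrow$ normal spanning tree'' is essentially the paper's argument in different clothing: the paper also reduces to writing $V(D)$ as a countable union of dispersed sets and then invokes \cref{thm:known_characterisation_jung}, except that it takes $V_n=\{v\in V(D): d(v,\omega)>1/n \text{ for all } \omega\in\Omega(D)\}$ directly from the metric rather than extracting singletons from a $\sigma$-locally-finite base. Your version works once you repair one slip: $\{v\}$ is not open in $|D|$ (every \dmtop-neighbourhood of a vertex contains inner points of incident edges), so you must pass to the subspace $V(D)\cup\Omega(D)$, where vertices are indeed isolated, and take a $\sigma$-locally-finite base of that subspace; your dispersedness argument (an end in the closure of $U_n$ would violate local finiteness of $\cB_n$ at that end) then goes through and matches the spirit of the paper's Lemma 4.1.

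The converse direction is where your proposal has a genuine gap. The families $\cB_n=\{\hat O_t : t \text{ at level } n\}$ consist of sets of vertices and ends only, so $\bigcup_n\cB_n$ cannot be a base of $|D|$: it contains no neighbourhoods of inner points of edges or of limit edges, and those points are exactly where most of the work lies (the paper's final lemma spends the bulk of its case analysis on them). Thickening the cones by partial edges and adding countably many locally finite families of edge-intervals is not routine bookkeeping: the edges of $D$ need not respect the tree order, there may be uncountably many edges and limit edges, and local finiteness of any such family must be checked at every end, where a single basic open set $\hat{C}_\epsilon(X,\omega)$ can meet uncountably many edge-intervals. Moreover, Bing--Nagata--Smirnov needs $|D|$ to be regular in \dmtop, which you assert but do not prove; regularity is not available off the shelf here, and proving it directly threatens to require essentially the metric you are trying to avoid constructing. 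The paper sidesteps both issues by building the metric explicitly: tree edges at level $n$ get length $1/2^n$, each tree edge $e$ gets a weight $w_{\{p,q\}}(e)$ recording how $p$ and $q$ sit relative to the two sides of $T-e$ (using the parameter $\lambda$ for inner points of edges), and $d(p,q)=\sum_{e}w_{\{p,q\}}(e)\ell(e)$; positive-definiteness, the triangle inequality, and the comparison with the \dmtop-basic open sets are then verified case by case. Either adopt that construction, or, if you keep the metrization-theorem route, you must exhibit the full base including edge and limit-edge pieces and supply a proof of regularity before the result follows.
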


This paper is organised as follows:
We present basic properties of normal trees in~\cref{sec:properties} and compare normal trees and normal arborescences in~\cref{sec:normal_arborescences}.
Finally, we prove \cref{thm:metrizable} in \cref{sec:metrizable}.

\section{Basic properties}\label{sec:properties}
Normal trees of directed graphs have the same separation property as normal trees of undirected graphs by the first condition of (weak) normal trees.
We begin by stating that the deletion of certain vertex sets (i.e.\ $\leq_T$-down-closed sets) of a normal tree splits a directed graph into strong components in the same way as for undirected graphs.
Given a weak normal tree $T$ and some $t \in V(T)$, let $\Down{t}_T:= \{s \in V(T): s \leq_T t \}$, $\ODown{t}_T:= \Down{t}_T \setminus \{t\}$ and similarly $\Up{t}_T, \OUp{t}_T$.
Further, let $C_t^T$ be the unique strong component of $D - \ODown{t}_T$ that contains $t$.

\begin{prop}[\cite{connectoids1}*{Proposition 2.4}]\label{prop:equivalence_weak_normal_tree}
	Let $D$ be a directed graph and $T$ a rooted, undirected tree with $V(T) \subseteq V(D)$. Then the following properties are equivalent:
	\begin{enumerate}[label=(\alph*)]
		\item\label{itm:equivalence_weak_normal_tree_1} $T$ is a weak normal tree of $D$, and
		\item\label{itm:equivalence_weak_normal_tree_2} $C_t^T \cap V(T) = \Up{t}_T$ holds for every vertex $t \in T$.
	\end{enumerate}
	Furthermore, $C_t^T = \Up{t}_T$ holds for every vertex $t$ of a weak normal spanning tree $T$.
\end{prop}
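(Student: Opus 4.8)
The plan is to prove the equivalence by establishing both implications separately and then deriving the spanning-tree addendum as a short corollary. Throughout I would rely on the two defining conditions of a weak normal tree — call them the \emph{separation} and \emph{connectivity} conditions — and on the elementary fact that, in a tree-order, any two incomparable vertices $u,v$ possess a meet $w_0 := u \wedge v$, together with distinct children $u',v'$ of $w_0$ satisfying $u' \leq_T u$ and $v' \leq_T v$.

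For \ref{itm:equivalence_weak_normal_tree_1}$\Rightarrow$\ref{itm:equivalence_weak_normal_tree_2}, assume $T$ is a weak normal tree and fix $t \in V(T)$; I would prove the two inclusions of $C_t^T \cap V(T) = \Up{t}_T$ in turn. For ``$\supseteq$'', given $s \in \Up{t}_T$ the connectivity condition (applied to $t \leq_T s$) yields a strongly connected subgraph of $D$ that contains $t$ and $s$ and avoids $\ODown{t}_T$; such a subgraph lies inside the strong component $C_t^T$ of $D - \ODown{t}_T$, so $s \in C_t^T$. For ``$\subseteq$'', take $s \in C_t^T \cap V(T)$. Then $s$ and $t$ lie in the strongly connected subgraph $C_t^T$, which meets $V(T)$ in $s$ and $t$ and avoids $\ODown{t}_T$. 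Were $s,t$ incomparable, the separation condition would supply $w \in V(C_t^T) \cap V(T)$ with $w \leq_T s,t$; but $w \leq_T t$ forces $w \in \Down{t}_T$, and $w \neq t$ would put $w \in \ODown{t}_T$, contradicting that $C_t^T$ avoids $\ODown{t}_T$, so $w=t \leq_T s$ — contradicting incomparability. Hence $s,t$ are comparable, and since $s <_T t$ would again place $s$ in $\ODown{t}_T$, we get $t \leq_T s$, i.e.\ $s \in \Up{t}_T$.

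For \ref{itm:equivalence_weak_normal_tree_2}$\Rightarrow$\ref{itm:equivalence_weak_normal_tree_1}, I would verify both conditions from the identity $C_t^T \cap V(T) = \Up{t}_T$. Connectivity is immediate: for $u \leq_T v$ we have $v \in \Up{u}_T = C_u^T \cap V(T)$, so $C_u^T$ is itself a strongly connected subgraph containing $u,v$ and avoiding $\ODown{u}_T$. The separation condition is the crux and the step I expect to be the main obstacle. Given a strongly connected $H$ and incomparable $u,v \in V(H) \cap V(T)$, I would pass to the meet $w_0 = u \wedge v$ and the child $u' \leq_T u$ of $w_0$. Since $u' \not\leq_T v$, the hypothesis gives $u \in C_{u'}^T$ but $v \notin C_{u'}^T$, so $u$ and $v$ lie in different strong components of $D - \ODown{u'}_T$ (note $v \notin \ODown{u'}_T$, as $v <_T u'$ would force $v \leq_T w_0 <_T v$). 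As $H$ is strongly connected and contains both, $H$ cannot be contained in $D - \ODown{u'}_T$; hence some $x \in V(H) \cap \ODown{u'}_T$ exists. Then $x <_T u'$ gives $x \leq_T w_0 \leq_T u,v$, so $x$ is the desired witness in $V(H) \cap V(T)$.

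Finally, for the spanning-tree addendum, \ref{itm:equivalence_weak_normal_tree_1}$\Rightarrow$\ref{itm:equivalence_weak_normal_tree_2} already yields $C_t^T \cap V(T) = \Up{t}_T$; when $V(T)=V(D)$ the vertex set of $C_t^T$ is contained in $V(D) = V(T)$, so intersecting with $V(T)$ changes nothing and $C_t^T = \Up{t}_T$ follows. The only genuinely delicate point throughout is the separation step in \ref{itm:equivalence_weak_normal_tree_2}$\Rightarrow$\ref{itm:equivalence_weak_normal_tree_1}: the right object to localise at is not the meet $w_0$ itself but the component $C_{u'}^T$ below the child of $w_0$ toward $u$, whose separating effect forces $H$ to dip into $\ODown{u'}_T$ and thereby produce a common $\leq_T$-lower bound of $u$ and $v$.
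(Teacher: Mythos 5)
Your proof is correct. Note that the paper itself gives no proof of this proposition --- it is quoted verbatim from \cite{connectoids1}*{Proposition 2.4} --- so there is nothing in this source to compare against line by line; but your argument is the natural one (the direct analogue of the classical undirected normal-tree argument), with both implications handled soundly: the ``$\subseteq$'' direction via applying separation inside $C_t^T$ itself, and the separation condition in \ref{itm:equivalence_weak_normal_tree_2}$\Rightarrow$\ref{itm:equivalence_weak_normal_tree_1} correctly localised at the child $u'$ of the meet, forcing $H$ to meet $\ODown{u'}_T$.
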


Given a finite set $X$ of vertices, we set $C(X, \omega)$ to be the unique strong component of $D - X$ that contains necklaces of $\omega$.
Given a necklace $N$ and a finite set $X \subseteq V(D)$ there exists a strong component of $N - X$ that contains almost all vertices of $N$, which in turn is again a necklace \cite{connectoids1}*{Proposition~2.3}.
We call this unique strong component the \emph{$X$-tail} of $N$~\cite{connectoids1}.
We say a set $U$ of vertices is \emph{dispersed} if every necklace has finite intersection with $U$~\cite{connectoids2}.
\begin{prop}[\cite{connectoids2}*{Corollary 3.1}] \label{cor:known_dispersed_jung}
	Let $D$ be a strongly connected directed graph and $U \subseteq V(D)$.
	Then $U$ is dispersed if and only if there exists a rayless normal tree of $D$ that contains $U$.
\end{prop}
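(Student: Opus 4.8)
The plan is to prove the two implications separately, viewing the equivalence as a sharpening of \cref{thm:known_characterisation_jung} in which a \emph{single} dispersed set is matched by a \emph{rayless} normal tree rather than a countable union being matched by an arbitrary one. Throughout I use that a rayless tree is automatically a normal tree, since the defining condition on rooted rays is then vacuous; so on both sides it suffices to work with rayless \emph{weak} normal trees.

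For the direction ``rayless normal tree $\Rightarrow$ dispersed'', let $T$ be a rayless weak normal tree with $U \subseteq V(T)$, and suppose some necklace $N$ meets $V(T)$ in an infinite set $S := V(N) \cap V(T)$; I derive a contradiction. Applying the separation condition with $H = N$ shows that $S$ is downward directed in $\leq_T$: any two of its elements have a common $\leq_T$-lower bound inside $S$. Since $T$ is rayless every set $\Down{s}_T$ is finite, so $S \cap \Down{s_0}_T$ is a finite chain for a fixed $s_0 \in S$, and its minimum is in fact a $\leq_T$-minimum $m$ of all of $S$. The subtree $T_S$ of $T$ spanned by the paths from $m$ to the vertices of $S$ is then infinite and rayless, so by König's Lemma it has a vertex $t$ with infinitely many children $c_1, c_2, \dots$ in $T_S$, and I may pick $s_i \in S$ above $c_i$. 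Here is the crux: all these children share the same finite strict down-set $X := \ODown{c_i}_T = \Down{t}_T$, so by condition \ref{itm:equivalence_weak_normal_tree_2} of \cref{prop:equivalence_weak_normal_tree} the sets $C_{c_i}^T$ are pairwise distinct strong components of the \emph{same} graph $D - X$, with $s_i \in C_{c_i}^T$. Since $N - X \subseteq D - X$, every strong component of $N - X$ lies inside a single strong component of $D - X$; but by \cite{connectoids1}*{Proposition~2.3} the $X$-tail of $N$ is a strong component of $N - X$ containing almost all of $V(N)$, hence infinitely many of the $s_i$ --- contradicting that it can meet at most one of the pairwise distinct components $C_{c_i}^T$. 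Thus $V(N) \cap V(T)$, and a fortiori $V(N) \cap U$, is finite, so $U$ is dispersed.

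For the converse I would build a rayless weak normal tree $T$ with $U \subseteq V(T)$ by the recursive construction underlying \cref{thm:known_characterisation_jung}: process the strong components of $D$ after deleting successive down-closed initial segments, always appending to a vertex $t$ a suitable root set of the strong component $C_t^T$ so that condition \ref{itm:equivalence_weak_normal_tree_2} of \cref{prop:equivalence_weak_normal_tree} is preserved, and exhaust $U$. The point to be extracted is that a \emph{single} dispersed $U$ must force the construction to terminate along every branch. The main obstacle is precisely this proof of raylessness: I would argue that an infinite rooted branch $t_0 <_T t_1 <_T \cdots$ of the constructed tree can, by telescoping the comparability condition~(ii) along the branch, be turned into a necklace (equivalently, via \cref{thm:endfaithfullness}, into an end of $D$ in the closure of $V(T)$) that contains infinitely many vertices of $U$, contradicting dispersedness and forcing every branch to be finite. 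Making this passage from an infinite branch to a genuine necklace precise --- choosing the finite strongly connected pieces coherently so that consecutive pieces overlap and distant ones are disjoint --- is the technical heart of the argument and the step I expect to be hardest.
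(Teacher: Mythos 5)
First, note that the paper does not prove this proposition at all: it is imported verbatim from \cite{connectoids2}*{Corollary~3.1}, so there is no in-paper argument to compare against. Your proof of the implication from a rayless normal tree to dispersedness is correct and complete: the reduction to a $\leq_T$-minimum $m$ of $S = V(N) \cap V(T)$ via downward directedness, the application of K\"onig's Lemma to the subtree spanned by $S$ to obtain a vertex $t$ with infinitely many children $c_1, c_2, \dots$, and the punchline that the $\Down{t}_T$-tail of $N$ is strongly connected in $D - \Down{t}_T$ yet would have to meet two of the pairwise distinct strong components $C_{c_i}^T$ all check out. (In particular, children of $t$ in the spanned subtree are children of $t$ in $T$, so $\ODown{c_i}_T = \Down{t}_T$ really is one common finite set, and your appeal to \cref{prop:equivalence_weak_normal_tree}\ref{itm:equivalence_weak_normal_tree_2} is legitimate.) Your preliminary observation that a rayless weak normal tree is automatically normal is also right, since the necklace condition on rooted rays is vacuous.

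The converse implication, however, is only a sketch, and as sketched it has a concrete gap beyond the difficulty you yourself flag. Even granting that an infinite rooted branch $t_0 <_T t_1 <_T \cdots$ of the constructed tree can be converted into a genuine necklace --- which already requires work, since the strongly connected witnesses provided by the second condition in the definition of a weak normal tree need be neither finite nor disjoint for non-consecutive indices --- such a necklace meets $V(T)$ infinitely often, not $U$. The tree $T$ will in general contain many connector vertices outside $U$, so this argument contradicts dispersedness of $V(T)$ rather than of $U$; nothing in your construction rules out a branch that runs off through vertices none of which lie in $U$. To close the argument you must either arrange the recursion so that every infinite branch contains infinitely many vertices of $U$ (for instance by extending into a strong component only when a $U$-vertex there forces it, and controlling the finitely many connectors added per step), or show directly that $V(T)$ itself can be kept dispersed. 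That is precisely the content of \cite{connectoids2}*{Corollary~3.1}, and your proposal does not supply it.
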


We introduce a notion of minors in directed graphs that includes notions of minors like butterfly minors and strong minors.
Let $H$ and $D$ be directed graphs. A \emph{broad minor model} of $H$ in $D$ is a family of disjoint subsets $(X_v)_{ v \in V(H)}$ of $V(D)$ and a family $(x_v)_{v \in V(H)}$ of vertices in $V(D)$ with $x_v \in X_v$ for every $v \in V(H)$ such that for each edge $uv \in E(H)$ there exists an $x_u$--$x_v$~path in $D[X_u \cup X_v]$. We say $H$ is a \emph{broad minor} of $D$ if there exists a broad minor model of $H$ in $D$.

\begin{prop}\label{prop:broad_minor}
	Let $D$ be a directed graph and $H$ a broad minor of $D$ witnessed by the minor model $(X_v)_{ v \in V(H)}$, $(x_v)_{v \in v(H)}$. If a set $U \subseteq V(D)$ is dispersed, then the set $W:= \{v \in V(H): x_v \in U \}$ is dispersed in $H$.
\end{prop}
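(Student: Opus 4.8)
The plan is to prove the statement directly, by showing that every necklace $N$ in $H$ has finite intersection with $W$. Given such a necklace $N = \bigcup_{n \in \NN} H_n$ with witnessing finite strongly connected pieces $(H_n)_{n \in \NN}$, I would lift it to a necklace $N'$ in $D$ via the broad minor model, arranged so that $N'$ contains the branch vertex $x_v$ for every $v \in V(N)$. Since the bags $(X_v)$ are pairwise disjoint and $x_v \in X_v$, distinct vertices of $N$ yield distinct branch vertices; hence if $W \cap V(N)$ were infinite, then $U$ would meet the necklace $N'$ in infinitely many vertices, contradicting that $U$ is dispersed in $D$. This reduces the proposition to the construction and verification of $N'$.

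To build $N'$, for every edge $uv \in E(N) \subseteq E(H)$ fix an $x_u$--$x_v$~path $P_{uv}$ in $D[X_u \cup X_v]$ provided by the minor model, and for each $n$ set
\[ H'_n := \Bigl( \bigcup_{uv \in E(H_n)} P_{uv} \Bigr) \cup \Set{x_v : v \in V(H_n)}, \qquad N' := \bigcup_{n \in \NN} H'_n. \]
Each $H'_n$ is a finite union of finite paths together with finitely many vertices, hence finite. It is also strongly connected: for $u, v \in V(H_n)$ the branch vertices $x_u, x_v$ are joined in both directions by concatenating the lifted paths along a directed path between $u$ and $v$ in the strongly connected graph $H_n$, and every internal vertex of a path $P_{uv}$ reaches $x_v$ and is reached from $x_u$ along $P_{uv}$, so it lies on a closed walk through branch vertices.

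The main point is then to check the necklace intersection pattern for $(H'_n)_{n \in \NN}$. Since $V(P_{uv}) \subseteq X_u \cup X_v$, we have $V(H'_n) \subseteq \bigcup_{v \in V(H_n)} X_v$; as the bags are pairwise disjoint, $V(H_i) \cap V(H_j) = \emptyset$ forces $V(H'_i) \cap V(H'_j) = \emptyset$, so by the necklace property of $N$ the pieces $H'_i, H'_j$ are disjoint whenever $|i - j| \geq 2$. Conversely, if $|i - j| \leq 1$ then $H_i$ and $H_j$ share some vertex $v$, and the explicitly added branch vertex $x_v$ lies in $V(H'_i) \cap V(H'_j)$. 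Thus $N'$ is a necklace in $D$ containing every $x_v$ with $v \in V(N)$, and the reduction above finishes the proof. I expect the delicate step to be exactly these two directions of the intersection pattern: disjointness of non-consecutive pieces relies on the disjointness of the bags and on each $P_{uv}$ staying inside $X_u \cup X_v$, while the overlap of consecutive pieces requires explicitly including the branch vertices $\Set{x_v : v \in V(H_n)}$ in each $H'_n$, since a single-vertex piece $H_n$ contributes no edge and hence no lifted path.
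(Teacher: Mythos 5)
Your proposal is correct and follows essentially the same route as the paper: lift the witnessing sequence $(H_n)_{n\in\NN}$ of a necklace in $H$ to strongly connected pieces $H_n'$ in $D$ built from the paths $P_{uv}$, observe $V(H_n')\subseteq\bigcup_{v\in V(H_n)}X_v$ and $\{x_v:v\in V(H_n)\}\subseteq V(H_n')$ to get the necklace intersection pattern, and conclude that infinitely many $x_v\in U$ would lie on the lifted necklace. Your explicit inclusion of the branch vertices $\{x_v:v\in V(H_n)\}$ in each piece is a slightly more careful rendering of the same construction.
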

\begin{proof}
	Suppose for a contradiction that there exists a necklace $N$ in $H$ that intersects $W$ infinitely. Let $(H_n)_{n \in \NN}$ be a witness of $N$ and let $n \in \NN$ be arbitrary.
	We consider for each $uv \in E(H_n)$ an $x_u$--$x_v$~path $P_{uv}$ in $D[X_u \cup X_v]$ and let $H_n'$ be the strongly connected subgraph of $D$ induced by the paths $P_{uv}$ for $uv \in E(H_n)$.
	Note that $\{x_v: v \in V(H_n)\} \subseteq V(H_n')$ and that $V(H_n') \subseteq \bigcup_{v \in V(H_n)}  X_v $, by construction.
	
	Thus $(H_n')_{n \in \NN}$ is a witness of a necklace $N'$ in $D$.
	In particular, $N'$ contains every vertex of $\{x_v: v \in W \cap N\}$.
	As $N$ intersects $W$ infinitely often, $N'$ intersects $U$ infinitely often, a contradiction to the dispersedness of $U$.
\end{proof}

\noindent
From \cref{thm:known_characterisation_jung,prop:broad_minor} we can deduce:
\begin{cor}
	The existence of normal spanning trees is closed under taking strongly connected broad minors, and thus under taking strongly connected subgraphs, butterfly minors and strong minors.
\end{cor}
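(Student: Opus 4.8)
The plan is to read off both directions of the corollary from \cref{thm:known_characterisation_jung} and to transport dispersedness along the minor model via \cref{prop:broad_minor}. First I would record a preliminary observation: any directed graph carrying a normal spanning tree is itself strongly connected. Indeed, writing $r$ for the root, every vertex $v$ satisfies $r \leq_T v$, so the second condition in the definition of a (weak) normal tree supplies a strongly connected subgraph containing both $r$ and $v$; hence each vertex both reaches and is reached from $r$, and $D$ is strongly connected. This matters because \cref{thm:known_characterisation_jung} is stated for strongly connected graphs, and I want to apply it both to $D$ and, at the end, to the minor $H$, each time with $U$ equal to the whole vertex set.

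Next I would unwind the equivalence on the source side. Assuming $D$ has a normal spanning tree, the implication \ref{itm:weak_normal_tree}$\Rightarrow$\ref{itm:countable_union} of \cref{thm:known_characterisation_jung} (with $U = V(D)$) gives a decomposition $V(D) = \bigcup_{n \in \NN} U_n$ into sets dispersed in $D$. If $H$ is a strongly connected broad minor of $D$, witnessed by $(X_v)_{v \in V(H)}$ and $(x_v)_{v \in V(H)}$, then \cref{prop:broad_minor} makes each $W_n := \{v \in V(H) : x_v \in U_n\}$ dispersed in $H$.

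The crux, and the only place that needs a moment's care, is verifying that these $W_n$ exhaust $V(H)$. Because the $X_v$ are pairwise disjoint and $x_v \in X_v$, every vertex $v \in V(H)$ has a well-defined representative $x_v \in V(D)$; since $V(D) = \bigcup_{n \in \NN} U_n$, this representative lies in some $U_n$, placing $v \in W_n$. Thus $V(H) = \bigcup_{n \in \NN} W_n$ is a countable union of sets dispersed in $H$, and the reverse implication \ref{itm:countable_union}$\Rightarrow$\ref{itm:weak_normal_tree} of \cref{thm:known_characterisation_jung}, now applied to the strongly connected $H$, yields a normal tree of $H$ containing $V(H)$, i.e.\ a normal spanning tree. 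I do not anticipate a genuine obstacle here; the argument is essentially a pull-back of the dispersed decomposition, and disjointness of the $X_v$ is exactly what guarantees the cover.

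For the closing clause I would note that each listed operation is a special case: a strongly connected subgraph $H$ is the broad minor with $X_v = \{v\}$ and $x_v = v$, while butterfly minors and strong minors are broad minors by the remark preceding \cref{prop:broad_minor}. The strongly-connected-broad-minor case then applies verbatim to each of them.
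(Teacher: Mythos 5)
Your proposal is correct and matches the paper's intended argument: the paper derives this corollary directly from \cref{thm:known_characterisation_jung} and \cref{prop:broad_minor} by pulling back a countable dispersed decomposition of $V(D)$ along the minor model, exactly as you do. Your additional checks (that a graph with a normal spanning tree is strongly connected, and that the sets $W_n$ cover $V(H)$) are sound and fill in the routine details the paper leaves implicit.
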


\section{Relation to normal arborescences}\label{sec:normal_arborescences}
In this section we compare the notion of normal trees with the notion of normal aborescences, which was introduced by B\"urger and Melcher in \cite{burger2020ends3}.
We show that, given some set of vertices~$U$, the existence of a normal arborescence containing $U$ implies the existence of a normal tree containing~$U$.
Further, we show that the converse is generally not true.

Normal trees and normal arborescences share the same separation property:

\begin{prop}[\cite{burger2020ends3}*{Lemma~3.4}]\label{prop:normal_arborescence_separation}
	Let $D$ be a directed graph and let $A$ be a normal arborescence in $D$.
	For every strongly connected subgraph $C$ of $D$ and every two $\leq_A$-incomparable elements $u, v \in V(C) \cap V(A)$ there exists $w \in V(C)$ such that $w \leq_A u, v$, where $\leq_A$ refers to the tree order of $A$.
\end{prop}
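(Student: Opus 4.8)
The plan is to argue by contradiction: assume that no vertex $w \in V(C)$ satisfies $w \leq_A u, v$, and manufacture from this a directed cycle in the normal assistant $\hat A$ of $A$, contradicting the normality (acyclicity of $\hat A$) of $A$. First I would use that $C$ is strongly connected to fix a closed directed walk $Z$ in $C$ passing through both $u$ and $v$, obtained by concatenating a $u$--$v$~path with a $v$--$u$~path in $C$. Since $Z$ is finite, the set $F := V(Z) \cap V(A)$ is a finite subset of $V(C) \cap V(A)$ containing $u$ and $v$.

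The central device is to track, for each $x \in F$, the $\leq_A$-minimal element $\mu(x)$ of $F$ lying $\leq_A$-below $x$: the set $\{y \in F : y \leq_A x\}$ is a nonempty chain (all its elements are $\leq_A$-ancestors of $x$), so it has a minimum, which is $\leq_A$-minimal in $F$, and distinct such minima are pairwise $\leq_A$-incomparable. The assumed failure of the conclusion immediately gives $\mu(u) \neq \mu(v)$, for otherwise their common value would be an element of $F \subseteq V(C)$ lying $\leq_A$-below both $u$ and $v$. Reading off the labels $\mu$ along the $A$-vertices encountered by $Z$ therefore produces a non-constant cyclic sequence of minimal elements of $F$.

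To finish I would translate transitions of this label sequence into edges of $\hat A$. Whenever two $A$-consecutive vertices $b, b'$ of $Z$ (those with all intermediate vertices of $Z$ outside $V(A)$) satisfy $\mu(b) \neq \mu(b')$, the corresponding segment of $Z$ yields a directed path, internally disjoint from $V(A)$, from $b \in \Up{\mu(b)}_A$ to $b' \in \Up{\mu(b')}_A$; since $\mu(b)$ and $\mu(b')$ are incomparable, this is precisely a witness for the assistant edge $\mu(b)\,\mu(b')$. Collecting these edges along the non-constant cyclic label sequence yields a closed directed walk of positive length in $\hat A$ supported on the minimal elements of $F$, and any such walk contains a directed cycle, contradicting acyclicity. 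The main obstacle, and the reason for the detour through $\mu$, is that a closed walk in $C$ moves both towards and away from the root of $A$, and upward segments (from a descendant to a proper ancestor) cannot be realised by the downward-oriented tree edges of $\hat A$; projecting to minimal elements dissolves this difficulty, since only the segments crossing between incomparable subtrees survive, and those are exactly the ones furnishing assistant edges.
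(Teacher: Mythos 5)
The paper does not prove this proposition at all: it is quoted as Lemma~3.4 of B\"urger and Melcher \cite{burger2020ends3}, so there is no in-paper proof to compare your argument against. Judged on its own terms, your argument is correct and complete. The key reduction is sound: the $\leq_A$-ancestors of any $x\in F$ form a finite nonempty chain, so $\mu(x)$ is well defined and is a $\leq_A$-minimal element of $F$, distinct such minima are incomparable, and the failure of the conclusion forces $\mu(u)\neq\mu(v)$. Two points are worth making explicit when you write this up. First, the segment of $Z$ between consecutive $A$-vertices is only a walk, so you should extract from it a $b$--$b'$ path; its internal vertices remain outside $V(A)$, and since $\mu(b)\neq\mu(b')$ are distinct minimal elements, $b\notin\Up{\mu(b')}_A$ and $b'\notin\Up{\mu(b)}_A$, so this really is an $\Up{\mu(b)}_A$--$\Up{\mu(b')}_A$ path witnessing the assistant edge. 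Second, the reason the ``upward'' segments cause no trouble is that $b'\leq_A b$ forces $\mu(b)=\mu(b')$ (the minimum over the larger ancestor set is comparable to, hence equal to, the minimum over the smaller one), so comparable consecutive $A$-vertices never produce a transition; you gesture at this but it deserves a line. With those details filled in, the closed walk on the $\mu$-values has at least two edges, hence contains a directed cycle in the normal assistant, contradicting normality.
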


As a normal arborescence is a subgraph of its host graph, it has more structural information about connectivity in terms of directed paths from the root to its vertices.
Nevertheless, this comes with a less precise description of strong components:
Given a normal arborescence $A$ in $D$ and some $a \in V(A)$, every strong component of $D - \ODown{a}_A$ intersecting $\Up{a}_A$ has to be contained in $\Up{a}_A$ by \cref{prop:normal_arborescence_separation}, but $D[\Up{a}_A]$ does not have to be a strong component itself.
In contrast to this, given a normal spanning tree $T$ of $D$, $D[\Up{t}_T]$ is a strong component of $D - \ODown{t}_T$ for every $t \in V(T)$ by \cref{prop:equivalence_weak_normal_tree}.

\NSAimpliesNST*

\begin{proof}
	Let $A$ be a normal arborescence in $D$ containing $U$.
	By \cref{thm:known_characterisation_jung}, it suffices to show that the distance classes $(X_n)_{n \in \NN}$ of $A$ are dispersed.
	More precisely, we show that every necklace~$N$ has finite intersection with $X_n$ for every $n \in \NN$.
	
	If there exists $a \in V(A)$ such that the $\Down{a}_A$-tail of $N$ is disjoint to $V(A)$, then $N$ has finite intersection with $V(A)$ and in particular, finite intersection with $X_n$ for every $n \in \NN$.
	Therefore we can assume that the $\Down{a}_A$-tail of $N$ contains a vertex of $V(A)$ for every $a \in V(A)$.
	
	We construct a rooted ray $(a_n)_{n \in \NN}$ in $A$ such that the $\ODown{a_n}_A$-tail of $N$ intersects $V(A)$ only in $\Up{a_n}_A$.
	Then the set $X_n$ has finite intersection with $N$ for every $n \in \NN$ as the $\ODown{a_{n + 1}}$-tail of $N$ intersects $V(A)$ only in $\Up{a_{n+1}}_A$, which is disjoint to $X_n$ since $(a_n)_{n \in \NN}$ is strictly $\leq_A$-increasing.
	
	Let $a_1$ be the root of $A$ and suppose that $(a_n)_{n \leq m}$ has been constructed for some $m \in \NN$.
	The $\Down{a_m}_A$-tail contains a vertex of $A$, by assumption.
	Further, since the $\ODown{a_m}_A$-tail intersects $V(A)$ only in $\Up{a_m}_A$, the $\Down{a_m}_A$-tail of $N$ intersects $V(A)$ only in $\OUp{a_m}_A$.
	By \cref{prop:normal_arborescence_separation}, there exists a child $a_{m + 1}$ of $a_m$ such that the $\Down{a_m}_A$-tail of $N$ intersects $V(A)$ only in $\Up{a_{m+1}}_A$.
	Then $a_{m+1}$ is as desired since $\ODown{a_{m+1}}_A = \Down{a_m}$.
	This finishes the construction of $(a_n)_{n \in \NN}$.
	
	If $A$ is rayless, then the construction of the sequence $(a_n)_{n \in \NN}$ terminates, i.e.\ every necklace has finite intersection with $V(A)$, and by \cref{cor:known_dispersed_jung} there exists a rayless normal tree containing~$U$.
\end{proof}
\begin{prop}\label{prop:normal_arborescence_counterexample}
	There exists a directed graph with a normal spanning tree but without a normal spanning arborescence.
\end{prop}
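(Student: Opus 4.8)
The plan is to produce one explicit strongly connected digraph $D$, to read a normal spanning tree off the characterisation in \cref{thm:known_characterisation_jung}, and then to rule out every normal spanning arborescence by hand. Let me first record the feature that constrains the example: since we want $D$ to carry a normal spanning tree, $D$ must be strongly connected. Indeed, if $T$ is a weak normal spanning tree with root $r$, then the second defining condition applied to the comparable pair $r\le_T v$ yields, for each $v$, a strongly connected subgraph of $D$ containing $r$ and $v$; hence all of $D$ is strongly connected. So the example cannot be separated out by mere reachability — it must be a genuinely directed (non-symmetric) strongly connected digraph, and the entire difficulty lives in the orientation of the arcs.

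The construction I would use exploits the one structural asymmetry between the two notions. A normal tree is an \emph{abstract} tree whose tree-edges need not be arcs of $D$, and whose rays only have to be captured by necklaces; a normal arborescence is a \emph{subgraph} whose tree-edges are arcs pointing away from the root. Accordingly I would build $D$ so that (i) $V(D)$ is a countable union of dispersed sets, so that \cref{thm:known_characterisation_jung} (or \cref{cor:known_dispersed_jung} in the rayless case) already guarantees a normal spanning tree, which I would then exhibit explicitly using abstract tree-edges together with the witnesses supplied by the separation property; while (ii) the arcs are laid out along a family of \emph{interlocking} directed cycles. The design principle is this: a single finite directed cycle is harmless, since one can order its vertices into a $\le_A$-chain along the cycle so that the one remaining arc becomes a back-arc (which the normal assistant discards); the cycles must therefore be chained so that linearising one of them forces a new cross-arc into another, making a uniform "linearisation" impossible.

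For the verification, the normal-tree half is routine once $V(D)$ is presented as a countable union of dispersed sets. The substance, and the main obstacle, is the second half: that \emph{no} spanning arborescence is normal. Here I would argue by contradiction. Given a normal spanning arborescence $A$ with root $r$, every vertex has finite depth, so $V(D)$ splits into countably many antichains $L_n$, each of which — by acyclicity of the normal assistant — induces an acyclic subdigraph of $D$; more importantly, acyclicity forbids any directed cycle built from the tree-arcs of $A$ together with the added cross-arcs $u\to v$, present whenever $u,v$ are $\le_A$-incomparable and $D$ has a path from $\Up{u}_A$ to $\Up{v}_A$ internally disjoint from $V(A)$. The crux is to show that the interlocking orientation forces such a cross-cycle for \emph{every} choice of $A$: for any assignment of parents one must locate $\le_A$-incomparable vertices whose up-sets are joined by internally disjoint paths in both directions, or a longer directed cross-cycle among incomparable vertices, contradicting normality by \cref{prop:normal_arborescence_separation}. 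Excluding the "re-parenting" escapes uniformly is precisely the hard point, and it is where the global interlocking of the construction (and, should a countable example not suffice, an uncountable pigeonhole over the countably many levels $L_n$) has to be invoked; this uniform no-escape argument is the part of the proof that requires real care.
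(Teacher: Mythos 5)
Your proposal is a plan rather than a proof: it never exhibits a concrete digraph, and it explicitly defers the decisive step. The existence statement cannot be verified from a ``design principle'' about interlocking cycles --- one has to write down a specific $D$, produce its normal spanning tree, and then carry out the argument that \emph{every} spanning arborescence fails to be normal. You correctly identify that last step as ``the hard point that requires real care,'' but you do not supply it, so the proposal contains a genuine gap exactly where the content of the proposition lies. (Your preliminary observation that $D$ must be strongly connected is correct, and your instinct that an uncountable pigeonhole over the countably many levels of $A$ is needed is also on the right track --- but neither is cashed in.)

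For comparison, the paper's example is quite clean and countability, not interlocking cycles, is the real mechanism. Take $V(D)=\{a_\alpha,b_\alpha:\alpha<\omega_1\}$ with edges $a_\alpha a_\beta$ and $b_\beta b_\alpha$ for $\alpha<\beta$, the matching edges $a_\alpha b_\alpha$, and the single return edge $b_0a_0$. Then $D-a_0$ is acyclic (order the vertices so every remaining arc goes forward), so every strong component of $D-a_0$ is a singleton and the star rooted at $a_0$ is a normal spanning tree --- no appeal to dispersed sets is needed. For the impossibility: any spanning arborescence $A$ must use uncountably many of the matching edges $a_\alpha b_\alpha$ (otherwise some $b_\beta$ is unreachable from the root), and since each branch of $A$ is countable, two such $a_{\alpha'},a_{\alpha''}$ are $\leq_A$-incomparable; the edges $a_{\alpha'}a_{\alpha''}$ and $b_{\alpha''}b_{\alpha'}$ then join $\Up{a_{\alpha'}}_A$ and $\Up{a_{\alpha''}}_A$ in both directions, creating a cycle in the normal assistant. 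Note that the ``uniform no-escape'' problem you worried about is dissolved by forcing uncountably many witnesses into a tree with countable branches, rather than by any local analysis of re-parenting. Without a construction of this kind and the accompanying argument, your proposal does not establish the proposition.
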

\begin{proof}	
Let $D$ be the directed graph with
$ V(D):=\{a_{\alpha}: \alpha < \omega_1 \} \cup \{b_{\alpha}: \alpha < \omega_1 \}$ and
$$ E(D):= \{a_{\alpha} a_{\beta}: \alpha < \beta < \omega_1 \} \cup \{b_{\beta} b_{\alpha}: \alpha < \beta < \omega_1 \} \cup \{a_{\alpha} b_{\alpha}: \alpha < \omega_1  \} \cup \{b_0a_0\}.$$

First, we show that $D - a_0$ is acyclic:
Consider the unique linear order $\leq$ of $V(D)$ satisfying $a_\alpha < a_\beta$ and $b_\beta < b_\alpha$ for every $\alpha < \beta < \omega_1$ and satisfying $a_\alpha < b_{\alpha'}$ for every $\alpha, \alpha' < \omega_1$.
Any edge of $E(D - a_0)$ has the property that its tail precedes its head in $\leq$.
Thus $D - a_0$ is acyclic and in particular, every strong component of $D - a_0$ is a singleton.
Thus the star with root $a_0$ and leaves in $V(D - a_0)$ is a normal spanning tree of $D$.

We suppose for a contradiction that $D$ contains a normal spanning arborescence $A$. Let $\gamma < \omega_1$ be such that either $a_\gamma$ or $b_\gamma$ is the root of $A$.
The arborescence $A$ contains uncountably many edges in $\{ a_{\alpha} b_{\alpha}: \alpha < \omega_1 \}$:
Otherwise there exists $\gamma < \beta< \omega_1$ such that $A$ contains no edge in $\{ a_{\alpha} b_{\alpha}: \beta \leq \alpha < \omega_1 \}$.
But every path from $a_\gamma$ or $b_\gamma$ to $b_\beta$ has to contain an edge in $\{ a_{\alpha} b_{\alpha}: \beta \leq \alpha < \omega_1 \}$ and thus $A$ is not spanning.

We consider the uncountable set $\{a_\alpha: a_\alpha b_\alpha \in E(A)\}$.
Since the branches of $A$ are countable, there exist $\alpha' < \alpha'' < \omega_1$ with $a_{\alpha'}, a_{\alpha''} \in \{a_\alpha: a_\alpha b_\alpha \in E(A)\}$ such that $a_{\alpha'}, a_{\alpha''} $ are $\leq_A$-incomparable.
The edges $a_{\alpha'} a_{\alpha''}$ and $b_{\alpha''} b_{\alpha'}$ connect $\Up{a_{\alpha'}}_A$ and $\Up{a_{\alpha''}}_A$ in both directions, contradicting the normality of $A$.
This finishes the proof.
\end{proof}
We remark that the proof of \cref{prop:normal_arborescence_counterexample} does not rely on the fact that $D$ does not have ends, i.e.\ given some cardinal $\kappa$ there exists a directed graph satisfying \cref{prop:normal_arborescence_counterexample} that has $\kappa$ many ends: attach a disjoint family of necklaces of cardinality $\kappa$ to the vertex $a_0$ in $D$.

We can deduce from the proof of \cref{prop:normal_arborescence_counterexample} that, in contrast to normal trees in undirected and directed graphs, the existence of a normal arborescence is not closed under taking strongly connected subgraphs:
\begin{prop}
	The existence of normal spanning arborescences is not closed under taking strongly connected subgraphs.
\end{prop}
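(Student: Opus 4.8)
The plan is to reuse the digraph $D$ from the proof of \cref{prop:normal_arborescence_counterexample} as the strongly connected subgraph that admits no normal spanning arborescence, and to exhibit a supergraph $D'$ of $D$ that does admit one. First I would record that $D$ is strongly connected: the edge $b_0 a_0$ together with the paths $a_\beta \to b_\beta \to b_0$ and $b_\beta \to b_0$ lets every vertex reach $a_0$, while $a_0$ reaches every vertex via the edges $a_0 a_\beta$ and $a_\alpha b_\alpha$. Thus $D$ is strongly connected, and by \cref{prop:normal_arborescence_counterexample} it has no normal spanning arborescence.

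Next I would obtain $D'$ from $D$ by adding a single \emph{gateway} vertex $c$ together with the edges $a_0 c$ and $c b_\alpha$ for every $\alpha < \omega_1$. Then $D = D' - c$ is a strongly connected subgraph of $D'$, so it remains to produce a normal spanning arborescence $A'$ of $D'$. I would take $A'$ to be rooted at $a_0$, with $c$ and every $a_\beta$ (for $0 < \beta < \omega_1$) a child of $a_0$ along the edges $a_0 c$ and $a_0 a_\beta$, and with every $b_\alpha$ a child of $c$ along $c b_\alpha$. This is a spanning arborescence of $D'$, since each vertex other than $a_0$ is assigned a unique parent along an edge of $D'$ oriented away from the root.

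The crux is to check that the normal assistant of $A'$ in $D'$ is acyclic. As $A'$ is spanning, any path witnessing an assistant edge is internally disjoint from $V(A') = V(D')$ and hence is a single edge; moreover each leaf has a singleton up-closure and $\Up{c}_{A'} = \{c\} \cup \{b_\alpha : \alpha < \omega_1\}$. The decisive point is that routing the $b_\alpha$ through $c$, rather than through the edges $a_\alpha b_\alpha$, places $b_\alpha$ in $\Up{c}_{A'}$ instead of in $\Up{a_\alpha}_{A'}$; this is precisely what destroys the $2$-cycles $a_{\alpha'} a_{\alpha''}, a_{\alpha''} a_{\alpha'}$ obstructing normality in \cref{prop:normal_arborescence_counterexample}, because the edges $b_{\alpha''} b_{\alpha'}$ no longer run backwards between up-closures of incomparable $a$-vertices. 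Enumerating the single-edge witnesses, I expect to obtain only the assistant edges $a_\beta \to a_{\beta'}$ for $\beta < \beta'$, the edges $b_\beta \to b_\alpha$ for $\alpha < \beta$, and the edges $a_\beta \to c$ and $a_\beta \to b_\beta$, all of which run forward in the linear order that lists $a_0$ first, then the $a_\beta$ by increasing index, then $c$, then the $b_\alpha$ by decreasing index.

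The main obstacle is confirming that this enumeration is complete — in particular that no edge of $D'$ runs from $\Up{c}_{A'}$ back to any singleton $\{a_\beta\}$, which holds because the only edge of $D$ from a $b$-vertex to an $a$-vertex is the feedback edge $b_0 a_0$, landing on the root $a_0$, and that the edges $a_0 c$ and $c b_\alpha$ lie within a single branch and so contribute nothing. Once this is verified, the displayed linear order witnesses that the assistant is acyclic, so $A'$ is a normal spanning arborescence of $D'$; together with \cref{prop:normal_arborescence_counterexample} this shows that the existence of normal spanning arborescences is not closed under taking strongly connected subgraphs.
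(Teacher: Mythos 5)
Your proof is correct, and it follows the same overall strategy as the paper: keep the digraph $D$ of \cref{prop:normal_arborescence_counterexample} as the strongly connected subgraph without a normal spanning arborescence, and embed it in a supergraph in which the $b_\alpha$ can all be hung off a single branch, so that the edges $b_\beta b_\alpha$ no longer run between up-closures of incomparable vertices. The only difference is in how the supergraph is built. The paper adds the edges $a_0 b_\alpha$ for all $\alpha<\omega_1$ and takes the spanning \emph{star} rooted at $a_0$; since every non-root vertex then has distance one to the root, the normal assistant is exactly $A$ together with all edges of $D'-a_0$, and acyclicity is immediate from the acyclicity of $D-a_0$ already established in \cref{prop:normal_arborescence_counterexample}. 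You instead add a new gateway vertex $c$ with edges $a_0c$ and $cb_\alpha$, which forces a depth-two arborescence and hence a slightly longer verification: you must enumerate the assistant edges by hand (correctly noting that, since the arborescence is spanning, witnessing paths are single edges, and that the only $b$-to-$a$ edge $b_0a_0$ lands on the root) and exhibit an explicit linear order of type $1+\omega_1+1+\omega_1^*$ respected by all assistant edges. Both arguments are sound; the paper's edge-only modification has the small additional feature that $D$ is a \emph{spanning} strongly connected subgraph of $D'$, and its acyclicity check is a one-liner, whereas your construction is equally valid but trades that brevity for a concrete case analysis of the assistant.
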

\begin{proof}
	We consider the directed graph $D$ in the proof of \cref{prop:normal_arborescence_counterexample}.
	Let $D'$ be the directed graph obtained from $D$ by adding the edge set $F:= \{a_0b_\alpha: \alpha < \omega_1 \}$.
	We show that the spanning arborescence $A$ with root $a_0$ and edge set $F \cup \{a_0a_\alpha: 1 \leq \alpha < \omega_1\}$ is normal.
	
	As all vertices except the root have distance one to the root in $A$,
	the normal assistant consists precisely of all edges of $A$ and all edges of $D' - a_0 = D - a_0$.
	Since $D - a_0$ is acyclic and as $a_0$ has in-degree zero in the normal assistant, the normal assistant is acyclic.
	Thus $A$ is a normal arborescence of $D'$, but the strongly connected subgraph $D$ of $D'$ does not have a normal arborescence. 
\end{proof}

\section{Characterisation via metrizable space $|D|$}\label{sec:metrizable}
Diestel proved \cite{diestel2006end}*{Theorem~3.1} that the topological space $|G|$ of an undirected graph $G$ is metrizable in the common topology \mmtop~if and only if $G$ has a normal spanning tree.
For directed graphs B\"urger and Melcher established the space $|D|$ together with the topology \dmtop; see~\cite{burger2020ends2} for an introduction.
In this section we show that Diestel's characterisation carries over to directed graphs:
\Metrizable*

The proof of~\cref{thm:metrizable} is structured as follows.
We begin by proving the forward implication in \cref{lem:metrizable_implies_nst}.
For the backward implication, given a normal spanning tree $T$ of a directed graph~$D$, we define a function $d: |D| \times |D| \mapsto \RR_{\geq 0}$ based on $T$ and prove that $d$ is a metric of $|D|$.
Finally, we verify that $d$ induces the topology \dmtop~of $|D|$.
Thus the existence of a normal spanning tree implies that $|D|$ is metrizable in \dmtop.

\begin{lemma}\label{lem:metrizable_implies_nst}
	If $|D|$ is metrizable in \dmtop, then $D$ has a normal spanning tree.
\end{lemma}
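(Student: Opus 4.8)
The plan is to mimic Diestel's proof strategy for undirected graphs, adapting it to the topology \dmtop. The goal is to show that metrizability of $|D|$ yields a normal spanning tree of $D$. The natural route is to use the characterisation already available in this paper, namely \cref{thm:known_characterisation_jung}: since a normal spanning tree is a normal tree with $U = V(D)$, it suffices to exhibit $V(D)$ as a countable union of dispersed sets (working component-wise via condition~\ref{itm:ends} if $D$ is not strongly connected). So the real task reduces to extracting, from a metric inducing \dmtop, a countable family of dispersed vertex sets that cover $V(D)$.

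First I would invoke a metrizability criterion, most likely the Bing--Nagata--Smirnov theorem, which says a topological space is metrizable if and only if it is regular and has a $\sigma$-locally-finite (or $\sigma$-discrete) base. From the metric inducing \dmtop~one obtains such a base $\cB = \bigcup_{n \in \NN} \cB_n$ where each $\cB_n$ is locally finite. The key step is then to relate this topological base to combinatorial dispersedness: I would look at the vertices of $D$ as points of $|D|$ and try to show that, because each $\cB_n$ is locally finite and the topology separates ends from finite vertex sets, the vertices can be sorted into countably many classes each of which meets every necklace finitely. Concretely, for each necklace $N$ its ends (or limit points in $|D|$) must have neighbourhoods drawn from the $\sigma$-locally-finite base, and local finiteness should force only finitely many vertices of $N$ to lie outside any fixed neighbourhood of the end, so that each $\cB_n$ contributes only a dispersed set of ``centres''.

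The main obstacle I expect is precisely this translation between the point-set topological notion of local finiteness and the graph-theoretic notion of a dispersed set. In the undirected case Diestel exploits that ends are captured by the topology and that a normal spanning tree exists iff $V(G)$ is a countable union of dispersed sets; the directed analogue requires understanding exactly how \dmtop~topologizes $|D|$, in particular how basic open sets around an end interact with the necklaces representing that end and with finite separating vertex sets $X$. I would need to verify that a basic open neighbourhood of an end $\omega$ excludes all but finitely many vertices of any necklace whose tail leaves that neighbourhood, and that the local finiteness of $\cB_n$ then bounds, for each necklace, the number of its vertices serving as centres of sets in $\cB_n$. This is where the directed subtleties (strong components, $X$-tails, the asymmetry of necklaces) must be handled carefully rather than quoted from the undirected setting.

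Once the translation is in place the argument should close quickly: the centres of the sets in each $\cB_n$ form a dispersed set $D_n$, these cover $V(D)$ since $\cB$ is a base (every vertex is isolated enough to be the centre of some basic set, or lies in one), and $V(D) = \bigcup_{n} D_n$ is the required countable union of dispersed sets. Applying \cref{thm:known_characterisation_jung}\,\ref{itm:countable_union} (after reducing to strong components if necessary) then produces a normal tree containing $V(D)$, i.e.\ a normal spanning tree, completing the forward implication.
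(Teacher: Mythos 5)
Your overall frame is the right one and matches the paper's: reduce via \cref{thm:known_characterisation_jung}\,\ref{itm:countable_union} to writing $V(D)$ as a countable union of dispersed sets. But the step that actually produces those dispersed sets is exactly the part you leave as an ``expected obstacle'', and the mechanism you sketch for it does not work as stated. If you take a $\sigma$-locally-finite (or $\sigma$-discrete) base $\cB=\bigcup_n\cB_n$ and declare the ``centres'' of the members of $\cB_n$ to be your $n$-th class, local finiteness near an end $\omega$ only tells you that a neighbourhood of $\omega$ meets \emph{finitely many members} of $\cB_n$; it does not bound the number of \emph{vertices} of a necklace $N\in\omega$ assigned to class $n$, because a single basic open set avoiding all ends (e.g.\ a union of uniform stars) can still contain infinitely many vertices of $N$. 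To close this you would need to assign $v$ to class $n$ only when some $B\in\cB_n$ contains $v$ and has $\overline{B}\cap\Omega(D)=\emptyset$ (available by regularity, since $\Omega(D)$ is closed in \dmtop), and then use that any open set containing infinitely many vertices of $N$ has $\omega$ in its closure, since every basic neighbourhood of $\omega$ contains an $X$-tail of $N$. None of this is in your sketch, and ``centre of a basic set'' is not even well defined for an abstract base.

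The detour through Bing--Nagata--Smirnov is also unnecessary. The paper uses the metric $d$ itself: since every vertex has a basic \dmtop-neighbourhood containing no ends, $V(D)=\bigcup_{n\in\NN}V_n$ with $V_n:=\{v\in V(D): d(v,\omega)>\tfrac1n\text{ for all }\omega\in\Omega(D)\}$; and $V_n$ is dispersed because for a necklace $N$ in an end $\omega$ there is a finite $X$ with $C(X,\omega)\subseteq B_{1/n}(\omega)$, so the $X$-tail of $N$ avoids $V_n$ and $N\cap V_n$ is finite. This is a two-line decomposition that replaces your entire base-extraction argument; I would either carry out the repair above in full or switch to this direct route.
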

\begin{proof}
	Let $d$ be a metric of $|D|$ that induces \dmtop.
	We define $V_n:= \{v \in V(D): d(v, \omega)> \tfrac{1}{n} \; \; \forall \omega \in \Omega(D) \}$ for all $n \in \NN$.
	Then $V(D)=\bigcup_{n \in \NN} V_n$ since each vertex has an $\epsilon$-ball in \dmtop~that contains no ends.
	We show that each set $V_n$ is dispersed.
	Then $D$ has a normal spanning tree by \cref{thm:known_characterisation_jung}.
	
	Let $n \in \NN$ be arbitrary.
	Further, let $N$ be an arbitrary necklace in $D$ and let $\omega$ be the end that contains $N$.
	Since $d$ induces \dmtop, there exists a finite set $X \subseteq V(D)$ such that $C(X, \omega)$ is contained in $B_{\tfrac{1}{n}}(\omega)$.
	Thus the $X$-tail of $N$ is contained in $C(X, \omega)$ and, in particular, the $X$-tail of $N$ avoids $V_n$.
	Thus $N$ contains only finitely many vertices of $V_n$, which implies that $V_n$ is dispersed.
\end{proof}

We turn our attention to the backward implication.
Let $D$ be an arbitrary directed graph with a normal spanning tree $T$.
Before defining the desired metric $d: |D| \times |D| \mapsto \RR_{\geq 0}$, we define $\vstart(p)$, $\vend(p) \subseteq V(D) \cup \Omega(D)$ for each point $p$ in $|D|$ and a length $\ell$ of each edge of $T$.

For a vertex $p$ or an end $p$, we set $\vstart(p) = \vend(p) := p$.
Given an inner point $p = (uv, \lambda)$ of a (limit) edge $uv$, we set $\vstart(p):= u$ and $\vend(p):= v$.
Further, let $\lambda(p):= \lambda \in (0,1)$.
The function $\ell: E(T) \rightarrow (0,\frac{1}{2}]$ is defined in the following way:
Let $n \in \NN$ be arbitrary and let $E_n \subseteq E(T)$ be the set of edges whose endvertices have distance $n -1$ and $n$ to the root $r$ of $T$ in $T$.
We set $\ell(e):= \frac{1}{2^n}$ for every $e \in E_n$.

Given some edge $e \in E(T)$, we describe a canonical partition of $V(D) \cup \Omega(D)$ into two classes induced by $T - e$:
The graph $T - e$ consists of two components $T_1$ and $T_2$.
Note that each end of $D$ corresponds to an end of $T$ by \cref{thm:endfaithfullness}.
Let $V(T_1) \cup \Omega(T_1)$ and $V(T_2) \cup \Omega(T_2)$ be the desired partition classes, which we refer to as the \emph{sides} of $T - e$.

Given two points $p$ and $q$ of $|D|$, we define a \emph{weight} $w_{\{p,q\}}(e) \in [0,1]$ of $e$:
\begin{enumerate}[label=(\roman*)]
	\item\label{itm:weight_1} \textbf{One side of $T - e$ contains no element of $\{\vstart(p), \vstart(q), \vend(p), \vend(q)\}$:}
		We set $w_{\{p,q\}}(e):= 0$.
	\item\label{itm:weight_2}  \textbf{One side of $T - e$ contains exactly one element $x$ of $\{\vstart(p), \vstart(q), \vend(p), \vend(q)\}$:}
		We set $w_{\{p,q\}}(e):= \begin{cases}
				1 - \lambda(p) & \text{if } x = \vstart(p),\\
				\lambda(p)		& \text{if } x = \vend(p),\\
				1 - \lambda(q) & \text{if } x = \vstart(q),\\
				\lambda(q)		& \text{if } x = \vend(q).
				\end{cases}
		$
	\item\label{itm:weight_3}  \textbf{One side of $T - e$ contains $\vstart(p), \vend(p)$ and the other side of $T - e$ contains $\vstart(q), \vend(q)$:}
		We set $w_{\{p,q\}}(e):= 1$.
	\item\label{itm:weight_4}  \textbf{One side of $T - e$ contains $\vstart(p), \vstart(q)$ and the other side of $T - e$ contains $\vend(p), \vend(q)$:}
		We set $w_{\{p,q\}}(e):= |\lambda(p) - \lambda(q)|$.
	\item\label{itm:weight_5}  \textbf{One side of $T - e$ contains $\vstart(p), \vend(q)$ and the other side of $T - e$ contains $\vend(p), \vstart(q)$:}
		We set $w_{\{p,q\}}(e):= 1 -  |\lambda(p) - \lambda(q)|$.
\end{enumerate}
\noindent
We show that the definition of $w$ indeed only relies on $\lambda$ being defined for internal points of edges:
Let $p$ be either a vertex or an end of $D$ and let $q$ be an arbitrary point of $|D|$.
Then only cases~\labelcref{itm:weight_1,itm:weight_2,itm:weight_3} apply to $p$ and $q$.
Furthermore, in case~\labelcref{itm:weight_2} $x$ can only be $\vstart(q)$ or $\vend(q)$.
Since $w$ is symmetric, we can conclude that it suffices to define $\lambda$ only for internal points of edges.

We are now ready to define
$$d(p,q):= \sum_{e \in E(T)} w_{\{p,q\}}(e) \cdot \ell(e)$$
for every two points $p, q \in |D|$.
We show that $d(p, q) \in \RR_{\geq 0}$ for every $p, q \in |D|$:
Note that at most four elements of $E_n$ can have elements of $\{\vstart(p), \vstart(q), \vend(p), \vend(q)\}$ on both sides for every $n \in \NN$.
Thus all but at most four elements of $E_n$ have weight zero.
This implies $d(p,q) = \sum_{n \in \NN} \sum_{e \in E_n}  w_{\{p,q\}}(e) \cdot \frac{1}{2^n} \leq \sum_{n \in \NN} \frac{4}{2^n} = 4$.
Since $w_{\{p,q\}}(e) \cdot \ell(e)$ is non-negative for every $e \in E(T)$, $d$ maps to $\RR_{\geq 0}$.

We show that $d$ is indeed a metric on $|D|$.
The map $d$ is symmetric since the weight of an edge is symmetric.
\begin{lemma}
	The map $d$ is positive-definite.
\end{lemma}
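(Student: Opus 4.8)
The goal is to show $d$ is positive-definite, meaning $d(p,q)=0$ if and only if $p=q$. Since every summand $w_{\{p,q\}}(e)\cdot\ell(e)$ is non-negative and $\ell(e)>0$ for all $e$, the sum $d(p,q)=0$ is equivalent to $w_{\{p,q\}}(e)=0$ for every edge $e\in E(T)$. Thus the plan is to prove the two directions separately: first that $p=q$ forces all weights to vanish, and second that all weights vanishing forces $p=q$.

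For the forward direction, suppose $p=q$. Then $\vstart(p)=\vstart(q)$ and $\vend(p)=\vend(q)$, and $\lambda(p)=\lambda(q)$ when $p$ is an internal point. I would check each of the five cases: in case~\labelcref{itm:weight_4} the weight is $|\lambda(p)-\lambda(q)|=0$, and case~\labelcref{itm:weight_3} cannot occur (it needs $\vstart$ and $\vend$ on opposite sides, impossible when the start/end pairs coincide). For a vertex or an end $p=q$, the only applicable cases are~\labelcref{itm:weight_1,itm:weight_2,itm:weight_3}; case~\labelcref{itm:weight_2} would put exactly one of the four (coinciding) points on a side, which is impossible since $\vstart(p)=\vstart(q)=\vend(p)=\vend(q)$ all lie together, so only case~\labelcref{itm:weight_1} applies and the weight is $0$. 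This direction is routine case-checking.

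The substantive direction is the converse: assume $w_{\{p,q\}}(e)=0$ for every $e\in E(T)$, and deduce $p=q$. The key idea is that the weights encode, for each edge $e$, how the four points $\vstart(p),\vend(p),\vstart(q),\vend(q)$ are distributed across the two sides of $T-e$, together with the $\lambda$-values. First I would argue that $\vstart(p)$ and $\vstart(q)$ must lie on the same side of every $T-e$: if some edge $e$ separated them, one checks that the relevant case would assign a strictly positive weight (none of the five cases gives weight zero when exactly the two starts, or a start and an end, sit on opposite sides in a way that is forced to be positive). Since $T$ is a tree and its edge cuts separate any two distinct elements of $V(T)\cup\Omega(T)$, having all starts on the same side of every cut forces $\vstart(p)=\vstart(q)$; symmetrically $\vend(p)=\vend(q)$. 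Here I would lean on \cref{thm:endfaithfullness}, which guarantees that ends of $D$ correspond faithfully to ends of $T$, so that distinct ends (or an end and a vertex) are genuinely separated by some finite edge cut of $T$.

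Once $\vstart(p)=\vstart(q)$ and $\vend(p)=\vend(q)$, the points $p$ and $q$ have the same type: both vertices, both ends, or both internal points of the same edge $uv$. In the first two cases we immediately get $p=q$. In the remaining case $p=(uv,\lambda(p))$ and $q=(uv,\lambda(q))$, and I would use the single edge $e=uv$ itself: its two sides separate $u=\vstart(p)=\vstart(q)$ from $v=\vend(p)=\vend(q)$, so case~\labelcref{itm:weight_4} applies and $w_{\{p,q\}}(uv)=|\lambda(p)-\lambda(q)|=0$, whence $\lambda(p)=\lambda(q)$ and $p=q$. The main obstacle is the careful bookkeeping in the converse: verifying that for \emph{every} way the four points could split across a cut, a zero weight genuinely forces the starts (and ends) together, ruling out the degenerate-looking configurations in cases~\labelcref{itm:weight_2,itm:weight_5} where a cancellation might superficially suggest weight zero without the points coinciding. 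Handling the end-points via \cref{thm:endfaithfullness} to ensure the tree cuts actually separate distinct ends is the part requiring the most care.
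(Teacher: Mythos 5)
Your proof is correct and follows essentially the same route as the paper's: both arguments reduce to the fact that any two distinct elements of $V(T)\cup\Omega(T)$ are separated by some edge cut of $T$ and that such a separating edge receives positive weight (you organise this as a contrapositive, first forcing $\vstart(p)=\vstart(q)$ and $\vend(p)=\vend(q)$ and then the $\lambda$-values, while the paper starts from one separating edge and branches on its type, but the content is the same). One small slip: in the final step you evaluate $w_{\{p,q\}}(uv)$ for the edge $uv$ carrying $p$ and $q$, but $uv$ need not belong to $E(T)$ (it may be a chord or a limit edge between ends); you should instead take any edge of $T$ separating $u$ from $v$ --- exactly as you already do for the start/end comparison --- which is of type~(iv) and gives $|\lambda(p)-\lambda(q)|=0$ all the same.
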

\begin{proof}
	For $p = q$ only cases \labelcref{itm:weight_1} and \labelcref{itm:weight_4} apply.
	In both cases $w_{\{p,q\}}(e) = 0$ for every $e \in E(T)$.
	Thus $d(p,q) = 0$ holds.
	
	For $p \neq q$ we show that $d(p, q) > 0$ holds.
	More precisely, we prove that there exists an edge with non-zero weight.
	Since $p \neq q$ there exist two distinct elements $x, y \in \{\vstart(p), \vstart(q), \vend(p), \vend(q)\}$.
	Let $e$ be an edge of the $x$--$y$~path in $T$.
	Then $x$ and $y$ witness that $e$ is not of type~\labelcref{itm:weight_1}.
	If $e$ is of type \labelcref{itm:weight_3}, $w_{\{p,q\}}(e) > 0$ holds.
	If $e$ is of type \labelcref{itm:weight_2} or \labelcref{itm:weight_5}, $w_{\{p,q\}}(e) > 0$ holds since $\lambda(p), \lambda(q) \in (0,1)$ whenever they are defined.
	Thus we can assume that $e$ is of type \labelcref{itm:weight_4} and $\lambda(p) = \lambda(q)$.
	Since $p \neq q$, either $\vstart(p) \neq \vstart(q)$ or $\vend(p) \neq \vend(q)$.
	Then one side of $T -e$ contains an edge $f \in E(T)$ of type~\labelcref{itm:weight_2}, and thus $w_{\{p,q\}}(f) > 0$.
\end{proof}

\begin{lemma}
	The map $d$ satisfies the triangle-inequality.
\end{lemma}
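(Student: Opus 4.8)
The plan is to deduce the triangle inequality for $d$ from an edgewise inequality for the weights. Since $d(p,q)=\sum_{e\in E(T)} w_{\{p,q\}}(e)\cdot \ell(e)$ and every $\ell(e)$ is nonnegative, it suffices to prove that for each edge $e\in E(T)$ and all points $p,q,r\in |D|$ one has
\[
	w_{\{p,r\}}(e)\le w_{\{p,q\}}(e)+w_{\{q,r\}}(e);
\]
summing this over $e$ with the coefficients $\ell(e)$, and using that all three resulting series converge because $d$ is finite, then yields $d(p,r)\le d(p,q)+d(q,r)$.

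To prove the edgewise inequality I would exhibit the weight $w_{\{\cdot,\cdot\}}(e)$ as the distance in a fixed metric space that does not depend on $p,q,r$. Fix $e$ and let $A$ and $B$ denote its two sides. Let $S=\RR/2\ZZ$ be the circle of circumference $2$, equipped with its arc-length metric $\rho$, so that $\rho$ is a genuine metric and $\rho(x,y)=\min\{|x-y|,\,2-|x-y|\}$ for representatives of $x,y$. Define a map $\phi_e\colon |D|\to S$ by
\[
	\phi_e(s):=\begin{cases}
		0 & \text{if } \vstart(s),\vend(s)\text{ lie in }A,\\
		1 & \text{if } \vstart(s),\vend(s)\text{ lie in }B,\\
		\lambda(s) & \text{if } \vstart(s)\in A,\ \vend(s)\in B,\\
		1+\lambda(s) & \text{if } \vstart(s)\in B,\ \vend(s)\in A.
	\end{cases}
\]
Thus a point crossing $e$ is placed on the circle at arc-distance $\lambda(s)$ from $A=0$ or from $B=1$ according to the direction of its underlying edge, while a point lying entirely on one side is sent to $A$ or to $B$.

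The heart of the argument is the identity $w_{\{p,q\}}(e)=\rho(\phi_e(p),\phi_e(q))$, which I would verify by running through the five cases in the definition of $w$. Cases~\labelcref{itm:weight_1,itm:weight_2,itm:weight_3,itm:weight_4} place $\phi_e(p)$ and $\phi_e(q)$ within one of the two half-circles $[0,1]$ or $[1,2]$, where $\rho$ is just the linear distance, and the stated values match at once. The only case that genuinely uses the circular (rather than linear) structure is~\labelcref{itm:weight_5}: there $\phi_e(p)=\lambda(p)$ and $\phi_e(q)=1+\lambda(q)$, so the two arcs joining them have lengths $1+(\lambda(q)-\lambda(p))$ and $1-(\lambda(q)-\lambda(p))$, whence $\rho(\phi_e(p),\phi_e(q))=1-|\lambda(p)-\lambda(q)|$, exactly the defined weight. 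Once this identity holds, the edgewise inequality is immediate from the triangle inequality for $\rho$.

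The step I expect to require the most care is precisely this case-matching. One must check that $\phi_e$ is well defined independently of which side is called $A$ and which $B$; swapping the labels acts on $S$ by the rotation $x\mapsto x+1$, an isometry, so $\rho(\phi_e(p),\phi_e(q))$ is unaffected and the definition is unambiguous. One must also confirm in each case that the shorter of the two competing arcs reproduces the stated value of $w$, including the degenerate possibilities where $\phi_e(p)$ or $\phi_e(q)$ equals $A$ or $B$. Everything else is routine: the reduction to the edgewise statement and the final summation over $E(T)$ need only the already-established facts that $d$ is finite and that $w$ and $\ell$ are nonnegative.
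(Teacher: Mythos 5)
Your argument is correct, and it takes a genuinely different route from the paper's. The paper establishes the same edgewise inequality $w_{\{p,q\}}(e)\le w_{\{p,o\}}(e)+w_{\{o,q\}}(e)$ by brute force: it splits into the five cases for the type of $e$ with respect to $p$ and $q$, and within each case into further subcases according to which sides of $T-e$ contain $\vstart(o)$ and $\vend(o)$, verifying an elementary numerical inequality each time. You instead exhibit $w_{\{\cdot,\cdot\}}(e)$ as the pullback of the arc-length metric $\rho$ on the circle $\RR/2\ZZ$ under the map $\phi_e$, so the edgewise inequality is inherited from the triangle inequality on the circle; the only thing left to check is the five-case identity $w_{\{p,q\}}(e)=\rho\p{\phi_e(p),\phi_e(q)}$, which involves no third point and therefore no subcase explosion. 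I verified this identity in all five cases (using $\lambda(p),\lambda(q)\in(0,1)$ so that the shorter arc is always the intended one), as well as the well-definedness under swapping the two sides, which is indeed the rotation $x\mapsto x+1$ and hence an isometry; case (v) is precisely where the genuinely circular structure is used. Your approach buys a conceptual explanation of why the weights obey a triangle inequality at all, and a substantially shorter verification; the paper's approach buys complete self-containedness at the cost of many routine subcases. The reduction to the edgewise statement and the final summation over $E(T)$, using nonnegativity of $\ell$ and convergence of the series, are the same in both proofs.
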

\begin{proof}
	Let $o, p, q$ be three arbitrary points in $|D|$.
	We show that $w_{\{p,q\}}(e) \leq w_{\{p, o\}}(e) + w_{\{o, q\}}(e)$ holds for every $e \in E(T)$.
	By definition of $d$, this implies that $d$ satisfies the triangle-inequality.
	Let $e \in E(T)$ be arbitrary.
	\begin{description}
		\item[Case 1: $e$ is of type~\labelcref{itm:weight_1} with respect to $p$ and $q$] Since $w_{\{p,q\}}(e) = 0$, the desired inequality is satisfied.
		\item[Case 2: $e$ is of type~\labelcref{itm:weight_2} with respect to $p$ and $q$] By symmetry of $d$, we can assume that $x \in \{\vstart(p), \vend(p)\}$.
			If $\vstart(o)$ and $\vend(o)$ are contained in the same side of $T -e$ as $x$, $e$ is of type \labelcref{itm:weight_3} with respect to $o$ and $q$.
			Thus the desired inequality is satisfied since $w_{\{o,q\}}(e) = 1$.
			If $\vstart(o)$ and $\vend(o)$ are contained in the opposite side of $T -e$ as $x$, $e$ is of type \labelcref{itm:weight_2} with respect to $p$ and $o$.
			Thus the desired inequality is satisfied since $w_{\{p,q\}}(e) = w_{\{p, o\}}(e)$.
			We can assume that $\vstart(o)$ and $\vend(o)$ are contained in distinct sides of $T - e$.
			\begin{itemize}
				\item If $\vstart(p) = x$, and $\vstart(p)$ and $\vstart(o)$ are contained in the same side of $T - e$:
					$$w_{\{p,q\}}(e) =  1 - \lambda(p) \leq |\lambda(p) - \lambda(o)| + (1 - \lambda(o)) =  w_{\{p, o\}}(e) + w_{\{o, q\}}(e).$$
				\item If $\vstart(p) = x$, and $\vstart(p)$ and $\vend(o)$ are contained in the same side of $T - e$:
					$$w_{\{p,q\}}(e) = 1 - \lambda(p) \leq (1 - |\lambda(p) - \lambda(o)|) + \lambda(o) =  w_{\{p, o\}}(e) + w_{\{o, q\}}(e).$$
				\item If $\vend(p) = x$, and $\vend(p)$ and $\vstart(o)$ are contained in the same side of $T - e$:
					$$w_{\{p,q\}}(e) = \lambda(p) \leq (1 - |\lambda(p) - \lambda(o)|) + (1 - \lambda(o)) =  w_{\{p, o\}}(e) + w_{\{o, q\}}(e).$$
				\item If $\vend(p) = x$, and $\vend(p)$ and $\vend(o)$ are contained in the same side of $T - e$:
					$$w_{\{p,q\}}(e) = \lambda(p) \leq |\lambda(p) - \lambda(o)| + \lambda(o) =  w_{\{p, o\}}(e) + w_{\{o, q\}}(e).$$
			\end{itemize}
		\item[Case 3: $e$ is of type~\labelcref{itm:weight_3} with respect to $p$ and $q$] If $\vstart(o)$ and $\vend(o)$ are contained in the same side of $T - e$, the edge $e$ is of type~\labelcref{itm:weight_3} with respect to either $p$ and $o$ or $o$ and $q$.
		Thus $1 \leq w_{\{p, o\}}(e) + w_{\{o, q\}}(e)$, which implies the desired inequality.
		
		We can assume that $\vstart(o)$ and $\vend(o)$ are contained in distinct sides of $T - e$.
		By symmetry of $d$, we can assume without loss of generality that $\vstart(o)$ and $\vstart(p)$ are contained in the same side of $T - e$.
		Thus
		$$w_{\{p,q\}}(e) = 1 \leq \lambda(o) + (1 - \lambda(o)) =  w_{\{p, o\}}(e) + w_{\{o, q\}}(e).$$
		\item[Case 4: $e$ is of type~\labelcref{itm:weight_4} with respect to $p$ and $q$]
		\begin{itemize}
			\item If $\vstart(o)$ and $\vend(o)$ are contained in the same side of $T - e$ as $\vstart(p)$ and $\vstart(q)$:
		$$w_{\{p,q\}}(e) = |\lambda(p) - \lambda(q)| \leq \lambda(p) + \lambda(q) =  w_{\{p, o\}}(e) + w_{\{o, q\}}(e).$$
			\item If $\vstart(o)$ and $\vend(o)$ are contained in the same side of $T - e$ as $\vend(p)$ and $\vend(q)$:
		$$w_{\{p,q\}}(e) = |\lambda(p) - \lambda(q)| \leq (1 - \lambda(p)) + (1 - \lambda(q)) =  w_{\{p, o\}}(e) + w_{\{o, q\}}(e).$$
			\item If $\vstart(o)$ is contained in the same side of $T - e$ as $\vstart(p)$ and $\vstart(q)$, and $\vend(o)$ is contained in the opposite side:
				$$w_{\{p,q\}}(e) = |\lambda(p) - \lambda(q)| \leq |\lambda(p) - \lambda(o)| + |\lambda(o) - \lambda(q)| =  w_{\{p, o\}}(e) + w_{\{o, q\}}(e).$$
			\item If $\vend(o)$ is contained in the same side of $T - e$ as $\vstart(p)$ and $\vstart(q)$, and $\vstart(o)$ is contained in the opposite side:
$$w_{\{p,q\}}(e) = |\lambda(p) - \lambda(q)| \leq (1 - |\lambda(p) - \lambda(o)|) + (1 - |\lambda(o) - \lambda(q)|) =  w_{\{p, o\}}(e) + w_{\{o, q\}}(e).$$
		\end{itemize}
	\item[Case 5: $e$ is of type~\labelcref{itm:weight_5} with respect to $p$ and $q$]
		\begin{itemize}
			\item If $\vstart(o)$ and $\vend(o)$ are contained in the same side of $T - e$ as $\vstart(p)$ and $\vend(q)$:
			$$w_{\{p,q\}}(e) =  1- |\lambda(p) - \lambda(q)| \leq \lambda(p) + (1 - \lambda(q)) =  w_{\{p, o\}}(e) + w_{\{o, q\}}(e).$$
			\item If $\vstart(o)$ and $\vend(o)$ are contained in the same side of $T - e$ as $\vend(p)$ and $\vstart(q)$:
			$$w_{\{p,q\}}(e) =1 - |\lambda(p) - \lambda(q)| \leq (1 - \lambda(p)) + \lambda(q) =  w_{\{p, o\}}(e) + w_{\{o, q\}}(e).$$
			\item If $\vstart(o)$ is contained in the same side of $T - e$ as $\vstart(p)$ and $\vend(q)$, and $\vend(o)$ is contained in the opposite side:
			$$w_{\{p,q\}}(e) = 1- |\lambda(p) - \lambda(q)| \leq |\lambda(p) - \lambda(o)| + (1 - |\lambda(o) - \lambda(q)|) =  w_{\{p, o\}}(e) + w_{\{o, q\}}(e).$$
			\item If $\vend(o)$ is contained in the same side of $T - e$ as $\vstart(p)$ and $\vend(q)$, and $\vstart(o)$ is contained in the opposite side:
			$$w_{\{p,q\}}(e) =1- |\lambda(p) - \lambda(q)| \leq (1 - |\lambda(p) - \lambda(o)|) + |\lambda(o) - \lambda(q)| =  w_{\{p, o\}}(e) + w_{\{o, q\}}(e).$$
		\end{itemize}
	\end{description}
	This completes the proof.	
\end{proof}
Thus $d$ is indeed a metric on $|D|$.
It remains to prove:

\begin{lemma}
	The metric $d$ induces the topology \dmtop.
\end{lemma}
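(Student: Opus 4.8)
The plan is to show that $d$ and \dmtop\ induce the same neighbourhood filter at every point of $|D|$. Since the balls $\{B_\eps(p):\eps>0\}$ form a neighbourhood basis of the metric topology, it suffices to check, for each point $p$, that every $B_\eps(p)$ contains a basic \dmtop-neighbourhood of $p$ and that every basic \dmtop-neighbourhood of $p$ contains some $B_\eps(p)$. I would treat the three kinds of points of $|D|$ in turn. The inner edge points are immediate: for two inner points $p,q$ of a fixed edge $xy$ only case~\labelcref{itm:weight_4} contributes, so $d(p,q)=|\lambda(p)-\lambda(q)|\cdot d(x,y)$, and $d(x,y)=\sum_{e}\ell(e)>0$ (the sum over the $x$--$y$~path in $T$) is a fixed positive constant. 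Thus on each edge $d$ is a positive multiple of the parameter metric, both topologies restrict to the usual interval topology on the open edge, and open subintervals witness the agreement of the two filters in both directions.

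For a vertex $v$ at distance $k$ from the root, set $c_v:=2^{-(k+1)}$. The first edge of the $T$-path leaving $v$ lies in $E_{k+1}$, so $\ell\ge c_v$ there; a short inspection of cases~\labelcref{itm:weight_1,itm:weight_2,itm:weight_3,itm:weight_4,itm:weight_5} then gives $d(v,u)\ge c_v$ for every vertex or end $u\ne v$ and $d(v,q)\ge c_v$ for every inner point $q$ of an edge not incident with $v$. Hence for $\eps<c_v$ the ball $B_\eps(v)$ contains no vertex or end other than $v$ and meets only edges incident with $v$; on an outgoing edge $vy$ one has $d(v,(vy,\lambda))=\lambda\, d(v,y)$ and on an incoming edge $uv$ one has $d(v,(uv,\lambda))=(1-\lambda)\,d(u,v)$, with $d(v,y),d(u,v)\ge c_v$. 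So $B_\eps(v)$ is $v$ together with initial (resp. terminal) segments of its incident edges of parameter-length at most $\eps/c_v$; conversely, since $d(x,y)\le 4$ throughout, the segments of parameter-length $\eps/4$ lie inside $B_\eps(v)$. As these stars are exactly the basic \dmtop-neighbourhoods of $v$, the filters agree.

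The substantial case is an end $\omega$, where I would use the directed structure through \cref{prop:equivalence_weak_normal_tree,thm:endfaithfullness}. Let $R=t_0t_1t_2\cdots$ be the rooted ray of $T$ corresponding to $\omega$ under \cref{thm:endfaithfullness}. Each down-set $\ODown{t_n}_T=\{t_0,\dots,t_{n-1}\}$ is finite, and by \cref{prop:equivalence_weak_normal_tree} the graph $D[\Up{t_n}_T]$ is the strong component $C(\ODown{t_n}_T,\omega)$ of $D-\ODown{t_n}_T$, so the Bürger--Melcher basic neighbourhoods $\hat C_{\eps'}(\ODown{t_n}_T,\omega)$ are basic \dmtop-neighbourhoods of $\omega$. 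Writing $m(z)$ for the largest index with $t_{m(z)}\le_T z$, the edge $t_{m(z)}t_{m(z)+1}$ of $R$ separates $z$ from the tail of $R$, so $d(\omega,z)\ge 2^{-(m(z)+1)}$, while $d(\omega,z)\le 2^{-(m-1)}$ for $z\in\Up{t_m}_T$, with the analogous estimates for ends and, through endvertices, for inner points. For the inclusion of a ball into an arbitrary $\hat C_\eps(X,\omega)$ with $X$ finite, I first choose $m$ so large that $X\cap\Up{t_m}_T=\emptyset$, which is possible because each $x\in X$ satisfies $x\ge_T t_n$ for only finitely many $n$. Then $D[\Up{t_m}_T]$ is a strongly connected subgraph of $D-X$ meeting the tail of $\omega$, hence $\Up{t_m}_T\subseteq C(X,\omega)$; taking $\delta<2^{-(m+1)}$ small relative to $\eps$, the lower estimate forces every vertex or end of $B_\delta(\omega)$ into $\Up{t_m}_T\subseteq C(X,\omega)$, and an inner point of $B_\delta(\omega)$ either has both endvertices in $C(X,\omega)$ or sits on a boundary edge with parameter driven into the $\eps$-segment by the relevant subcase of~\labelcref{itm:weight_2}; thus $B_\delta(\omega)\subseteq \hat C_\eps(X,\omega)$. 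The reverse inclusion follows from the upper estimate by picking $n$ with $2^{-(n-1)}<\eps/2$ and $\eps'$ small.

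The hard part is exactly this end case, and within it the inclusion of metric balls into $\hat C_\eps(X,\omega)$ for an \emph{arbitrary} finite separator $X$, not merely for the tree-separators $\ODown{t_n}_T$. The decisive tool is \cref{prop:equivalence_weak_normal_tree}: each up-set $\Up{t_m}_T$ induces a strongly connected subgraph, so once $m$ is large enough that $\Up{t_m}_T$ avoids the finite set $X$, this subgraph lies in a single strong component of $D-X$, namely $C(X,\omega)$. This is precisely where the directed argument departs from Diestel's undirected one and where the normality of $T$ enters. Some bookkeeping will also be needed to match the directed conventions of $\hat C_\eps(X,\omega)$ — which boundary edges are entered from $X$ and which leave towards $X$ — with the two subcases of weight rule~\labelcref{itm:weight_2}, but this is routine once the separating edge of bounded $T$-depth between $X$ and the tail of $\omega$ has been located.
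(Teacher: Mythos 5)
Your overall strategy coincides with the paper's: compare neighbourhood filters pointwise, bound $d$ from below at a vertex via the minimal length of its incident tree edges, and at an end use \cref{prop:equivalence_weak_normal_tree} to identify $C(\ODown{t}_T,\omega)$ with $D[\Up{t}_T]$ so that the sets $\hat C_{\epsilon}(\ODown{t}_T,\omega)$ form a neighbourhood basis. Your vertex, end and ordinary-edge cases are essentially the paper's argument; in particular your reduction of an arbitrary $\hat C_{\epsilon}(X,\omega)$ to a tree-separator is exactly the paper's step ``we can assume $O$ has the form $\hat C_{\epsilon}(\ODown{t}_T,p)$''.

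There is, however, a genuine gap: in \dmtop~the space $|D|$ has a fourth type of point, namely the inner points of \emph{limit} edges (edges with at least one endpoint an end of $D$), and your proof does not treat them. You announce ``three kinds of points'', and your argument for inner edge points --- that only case~(iv) contributes, so $d$ restricts to a positive multiple of the parameter metric and ``open subintervals witness the agreement of the two filters'' --- fails for limit edges: a basic \dmtop-neighbourhood of an inner point $p$ of a limit edge is a set $\hat E_{\epsilon,p}(X,\vstart(p)\vend(p))$ that contains inner points of many edges attached to the components $C(X,\omega)$, so an open subinterval of the limit edge itself is not \dmtop-open and cannot serve as a witness. One must show separately that a sufficiently small $d$-ball around $p$ forces $\vstart(q)$ (resp.\ $\vend(q)$) into $C(X,\vstart(p))$ (resp.\ $C(X,\vend(p))$) together with $|\lambda(p)-\lambda(q)|<\epsilon$, and conversely that $\hat E_{\epsilon',p}(X,\cdot)$ for a suitable finite $X$ (chosen so that the relevant components sit deep in $T$) lies inside a prescribed ball. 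This is the longest case of the paper's proof, and it does not follow from your end case, which only controls the neighbourhood filter at the end itself rather than at interior points of edges incident with it.
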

We remark that for every vertex $v$ in $D$ there exists a positive lower bound for the length of edges incident with $v$.
\begin{proof}
	Let $p$ be an arbitrary point in $|D|$.
	We begin by proving that for every small basic open set $O$ in \dmtop~containing $p$ there exists $\delta > 0$ such that $B_\delta(p) \subseteq O$.
	\begin{description}
		\item[$p$ is a vertex] Then $O$ is a uniform star of radius $\epsilon > 0$ around $p$.
			Let $\ell$ be a positive lower bound for the length of edges incident with $p$.
			We set $\delta:= \epsilon \cdot \ell$ and show, given an arbitrary point $q \in B_\delta(p) \setminus \{p\}$, that $q \in O$.
			
			Either $\vstart(q)=p$ or $\vend(q) = p$:
			Otherwise there exist either an edge of $T$ incident with $p$ of type~\labelcref{itm:weight_3} or two edges incident with $p$ of type~\labelcref{itm:weight_2} with respect to $p$ and $q$.
			Note that in the latter case these edges have weight $\lambda(q)$ and $1 - \lambda(q)$.
			In both cases $d(p,q) \geq \ell$, a contradiction.
			
			Thus there exists an edge of type~\labelcref{itm:weight_2} with respect to $p$ and $q$, where $x \in \{ \vstart(q), \vend(q)\}$.
			If $x = \vstart(q)$, $1 - \lambda(q) \leq \delta < \epsilon$.
			If $x = \vend(q)$, $\lambda(q) \leq \delta < \epsilon$.
			Thus in both cases, $q$ is contained in the uniform star of radius $\epsilon$ around $p$.
			
				\item[$p$ is an end] We can assume that $O$ is a basic open set of the form $\hat{C}_\epsilon(\ODown{t}_T, p)$  for some $1 > \epsilon > 0$ and some $t \in V(T) \setminus \{r\}$ with $p \in \Omega(D[\Up{t}_T])$.
				Note that $C(\ODown{t}_T, p ) = D[\Up{t}_T]$.
				Let $\ell$ be the length of the edge $e$ of $T$ incident to $t$ and its parent.
				We set $\delta := \epsilon \cdot \ell$ and show, given an arbitrary point $q \in B_\delta(p) \setminus \{p\}$, that $q \in O$.
				
				Since $\vstart(p) = \vend(p)$, $e$ is not of type~\labelcref{itm:weight_4} or \labelcref{itm:weight_5}.
				Since $d(p,q) < \delta$, $w_{\{p,q\}}(e) < \epsilon$ and therefore $e$ is not of type~\labelcref{itm:weight_3}.
				This implies that at least one of $\vstart(q)$ and $\vend(q)$ is contained in $\hat{C}_\epsilon(\ODown{t}_T, p)$.
				If one of $\vstart(q)$ and $\vend(q)$ is not contained in $\hat{C}_\epsilon(\ODown{t}_T, p)$, then $e$ is of type~\labelcref{itm:weight_2}.
				Thus $w_{\{p,q\}}(e) < \epsilon$ implies that $(1 - \lambda(q)) < \epsilon$ for $\vstart(q) \notin \hat{C}_\epsilon(\ODown{t}_T, p)$ and $\lambda(q) < \epsilon$ for $\vend(q) \notin \hat{C}_\epsilon(\ODown{t}_T, p)$.
				Thus $q \in \hat{C}_\epsilon(\ODown{t}_T, p)$.
				
				\item[$p$ is an inner point of a (limit) edge]
				First, we assume that $p$ is an inner point of a limit edge.
				We can assume that $O$ is a basic open set of the form $\hat{E}_{\epsilon,p}(X, \vstart(p) \vend(p))$ for $1 > \epsilon > 0$ and $X \subseteq V(D)$ such that for every end $\omega$ in $\{\vstart(p), \vend(p)\}$ there exists $t_\omega \in V(T) \setminus\{r\}$ with $C(X, \omega) = C(\ODown{t_\omega}_T, \omega) = D([\Up{t_\omega}_T])$.
				
				Let $\rho:= \min(\lambda(p), 1 - \lambda(p))$.
				For a vertex $y$ in $\{\vstart(p), \vend(p)\}$, let $\ell_y$ be a positive lower bound for the length of edges incident with $y$.
				For an end $y$ in $\{\vstart(p), \vend(p)\}$, let $\ell_y$ be the length of the edge $e_y$ of $T$ incident with $t_y$ and its parent.
				We set $\delta:= \frac{\rho}{2} \cdot \epsilon \cdot \min(\ell_{\vstart(p)}, \ell_{\vend(p)})$ and show, given an arbitrary point $q \in B_\delta(p) \setminus \{p\}$, that $q \in O$.
				
				We prove that $\vstart(p) = \vstart(q)$ and $|\lambda(p) - \lambda(q)| < \epsilon$ if $\vstart(p)$ is a vertex:
				The edge $e$ of the $\vstart(p)$--$\vend(p)$~path or ray in $T$ incident with $\vstart(p)$ is not of type~\labelcref{itm:weight_1,itm:weight_2,itm:weight_3} or \labelcref{itm:weight_5} since $w_{\{p,q\}}(e) < \rho$ and $\vstart(p), \vend(p)$ are on distinct sides of $T - e$.
				Thus $e$ is of type~\labelcref{itm:weight_4} and $|\lambda(p) - \lambda(q)| \leq \frac{\rho}{2}, \epsilon$.
				This implies $\lambda(q), (1 - \lambda(q)) \geq \frac{\rho}{2}$.
				Thus no edge $f$ incident with $\vstart(p)$ is of type~\labelcref{itm:weight_2}, since $w_{\{p,q\}}(f) < \frac{\rho}{2}$.
				We can deduce $\vstart(p) = \vstart(q)$.
				
				 We show that $\vstart(q) \in V(C(X, \vstart(p))) \cup \Omega(C(X, \vstart(p)))$ and $|\lambda(p) - \lambda(q)| < \epsilon$ if $\vstart(p)$ is an end:
				 The edge $e_{\vstart(p)}$ is not of type~\labelcref{itm:weight_1,itm:weight_2,itm:weight_3,itm:weight_5} since $w_{\{p,q\}}(e_{\vstart(p)})< \rho \leq 1 - |\lambda(p) -\lambda(q)| \leq 1$ and $\vstart(p), \vend(p)$ are on distinct sides of $T - e_{\vstart(p)}$.
				 Thus $e_{\vstart(p)}$ is of type~\labelcref{itm:weight_4} and $|\lambda(p) - \lambda(q)| < \epsilon$.
				 Furthermore, $\vstart(q) \in V(C(X, \vstart(p))) \cup \Omega(C(X, \vstart(p)))$.
				
				Similarly, we can prove that $\vend(p) = \vend(q)$ if $\vend(p)$ is a vertex and $\vend(q) \in V(C(X, \vend(p))) \cup \Omega(C(X, \vstart(p)))$ if $\vend(p)$ is an end.
				Thus $q \in \hat{E}_{\epsilon,p}(X, \vstart(p) \vend(p))$.
				
				Second, we assume that $p$ is an inner point of an edge and assume that $O$ is a basic open set of $p$ for some $\epsilon > 0$.
				We define $\delta$ as before and show, given an arbitrary point $q \in B_\delta(p) \setminus \{p\}$, that $q \in O$.
				We remark that, by the same argument as before, $\vstart(p) = \vstart(q)$ and $\vend(p)=\vstart(q)$.
				Then there exists an edge $e$ of $T$ incident with $p$ of type~\labelcref{itm:weight_4}.
				Thus $|\lambda(p) - \lambda(q)| \cdot \ell(e) < \delta \leq \epsilon \cdot \min(\ell_{\vstart(p)}, \ell_{\vend(p)})$.
				Since $\ell(e) \geq \ell_{\vstart(p)}$, we obtain $|\lambda(p) - \lambda(q)| < \epsilon$.
				This shows that $q \in O$.
				
	\end{description}
	Now we show that for every $1 > \delta > 0$ there exists a basic open set $O$ in \dmtop~containing $p$ such that $O \subseteq B_\delta(p)$.
Then the metric $d$ induces the topology \dmtop.
	\begin{description}
		\item[$p$ is a vertex]
		We set $\epsilon := \frac{\delta}{2}$.
		Let $o$ be some point in the uniform star of radius $\epsilon$ around $p$.
		Then every edge of $T$ with non-zero weight with respect to $p$ and $o$ is of type~\labelcref{itm:weight_2}.
		Furthermore, $E_n$ contains at most two edge of non-zero weight with respect to $p$ and $o$ for every $n \in \NN$.
		Thus $d(p, o) < \sum_{n \in \NN} 2 \cdot \frac{1}{2^n} \cdot \epsilon = \delta$.
		
		\item[$p$ is an end] 
		Let $n \in \NN$ with $\frac{1}{2^n} < \delta$ and let $t$ be the unique vertex of $T$ of distance $n+3$ in $T$ to the root with $p \in \Omega(D[\Up{t}_T])$.
		Given some $q \in \hat{C}_{\frac{1}{2^{n+2}}}(\ODown{t}_T, p)$, we show that $q \in B_{\frac{1}{2^n}}(p)$.
		
		By definition of $\hat{C}_{\frac{1}{2^{n+2}}}(\ODown{t}_T, p)$,
		either $\vstart(q)$ or $\vend(q)$ is contained in $V(C(\ODown{t}_T, p)) \cup \Omega(C(\ODown{t}_T, p))$.
		Note that $C(\ODown{t}_T, p) = D[\Up{t}_T]$.
		Thus each $E_m$ for $m \leq n +3$ contains at most two edges of non-zero weight with respect to $p$ and $q$ and these edges are of type~\labelcref{itm:weight_2}.
		Further, there are at most four edges of non-zero weight in $E_m$ for $m > n +3$.
		
		If $\vstart(q)$ and $\vend(q)$ are contained in $V(C(\ODown{t}_T, p)) \cup \Omega(C(\ODown{t}_T, p))$, then $d(p,q) \leq \sum_{m \leq n +3 } 0 + \sum_{m > n+3} 4 \cdot 1 \cdot \frac{1}{2^m} < \frac{1}{2^n}$.
		Thus we can assume that exactly one of $\vstart(q)$ and $\vend(q)$ is contained in $V(C(\ODown{t}_T, p)) \cup \Omega(C(\ODown{t}_T, p))$.
		If $\vstart(q) \in V(C(\ODown{t}_T, p)) \cup \Omega(C(\ODown{t}_T, p))$, then $\lambda(q) < \frac{1}{2^{n+2}}$ and thus $d(p,q) \leq \sum_{m \leq n +3 } 2 \cdot \frac{1}{2^{n+2}} \cdot \frac{1}{2^m} + \sum_{m > n+3} 4 \cdot 1 \cdot \frac{1}{2^m} < \frac{1}{2^n}$.
		If $\vend(q) \in V(C(\ODown{t}_T, p)) \cup \Omega(C(\ODown{t}_T, p))$, then $1 - \lambda(q) < \frac{1}{2^{n+2}}$ and thus similarly $d(p,q) < \frac{1}{2^n}$.
		
		\item[$p$ is an inner point of an edge]
		Let $q$ be some point in the $\frac{\delta}{2}$-ball around $p$ in $\vstart(p) \vend(p)$.
		We show $q \in B_\delta(p)$.
		
		Note that every edge of $T$ is either of type~\labelcref{itm:weight_1} or of type~\labelcref{itm:weight_4}.
		Further there are at most two edges in $E_m$ of type~\labelcref{itm:weight_4} for every $m \in \NN$.
		Thus $d(p,q) \leq \sum_{m \in \NN} 2 \cdot |\lambda(p) -\lambda(q)| \cdot \frac{1}{2^m}  < \delta$.
		
		\item[$p$ is an inner point of a limit edge]
		Let $n \in \NN$ with $\frac{1}{2^n} < \delta$ and let $X \subseteq V(D)$ such that $C(X, \omega)$ contains only vertices of distance at least $n+3$ to $r$ in $T$ for every end $\omega$ in $\{\vstart(p), \vend(p)\}$ and such that every vertex in $\{\vstart(p), \vend(p)\}$ is in $X$.
		Further, let $q$ be some point in $\hat{E}_{\frac{1}{2^{n+2}}, p}(X, \vstart(p) \vend(p))$.
		We show $q \in B_\delta(p)$.
		
		For every $m \leq n+3$ the edges in $E_m$ are either of type~\labelcref{itm:weight_1} or of type~\labelcref{itm:weight_4} by choice of $X$.
		Further, there are at most two edges in $E_m$ of type~\labelcref{itm:weight_4}.
		Further there are at most four edges in $E_m$ that are of type~\labelcref{itm:weight_2,itm:weight_3,itm:weight_4,itm:weight_5}, i.e.\ with non-zero weight, for every $m > n+3$.
		Thus $d(p,q) \leq \sum_{m \leq n+3} 2 \cdot |\lambda(p) -\lambda(q)| \cdot \frac{1}{2^m} + \sum_{m > n +3} 4 \cdot \frac{1}{2^m} < \delta$. \qedhere
	\end{description}
\end{proof}
Thus $d$ is indeed the desired metric inducing \dmtop, which proves the backward implication of \cref{thm:metrizable}.

\section*{Acknowledgement}

The author gratefully acknowledges support by a doctoral scholarship of the Studienstiftung des deutschen Volkes.

\bibliography{ref.bib}

\end{document}